\makeatletter\@addtoreset{equation}{section}\makeatother
\newtheorem{theorem}{Theorem}[section]
\newtheorem{corollary}[theorem]{Corollary}
\newtheorem{lemma}[theorem]{Lemma}
\newtheorem{proposition}[theorem]{Proposition}
\newtheorem{assumption}[theorem]{Assumption}
\newtheorem{remark}[theorem]{Remark}
\numberwithin{equation}{section}
\title[Stochastic branching at the
edge]{Stochastic branching at the edge: Individual-based modeling of
tumor cell proliferation\footnote{Financially supported by National
Science Centre, Poland (NCN), grant 2017/25/B/ST1/00051}}
\author{ Yuri  Kozitsky}
\address{Instytut Matematyki, Uniwersytet Marii Curie-Sk{\l}odowskiej, 20-031 Lublin, Poland}
\email{jkozi@hektor.umcs.lublin.pl}
\keywords{Aging; tumor proliferation; cell cycle; honest evolution;
stochastic semigroup; Sobolev space}
\begin{document}

\subjclass{34K30;   47D06; 92D25}%

\begin{abstract}
An individual-based model of stochastic branching is proposed and
studied, in which point particles drift in
$\bar{\mathds{R}}_{+}:=[0,+\infty)$ towards the origin (edge) with
unit speed, where each of them splits into two particles that
instantly appear in $\bar{\mathds{R}}_{+}$ at random positions.
During their drift the particles are subject to a random
disappearance (death). The model is intended to capture the main
features of the proliferation of tumor cells, in which trait $x\in
\bar{\mathds{R}}_{+}$ of a given cell is time to its division and
the death is caused by therapeutic factors. The main result of the
paper is proving the existence of an honest evolution of this kind
and finding a condition that involves the death rate and cell cycle
distribution parameters, under which the mean size of the population
remains bounded in time.

\end{abstract}

\maketitle

\section{Introduction}
One of the most natural applications of semigroups of bounded
positive operators in $L^1$-like spaces \cite{hon,Banasiak,TV} is
the description of the evolution of probability densities, widely
used in population biology, genetics, medical sciences, etc. Among
the processes  which have multiple applications one might
distinguish branching \cite{KA}. In this paper, we propose and study
an individual-based model that can describe the proliferation of
tumor cells, cf. \cite[Sect. 2.2]{Kimm}. Here `individual-based'
means that each single member of the population is taken into
account in an explicit way. This is in contrast to macroscopic
models where populations are described in terms of aggregate
parameters such as density, cf. e.g., \cite{Mee,Rot}, which might be
considered as an advantage of the theory.  In the proposed model,
each member of a finite population of particles (assuming cells) is
characterized by a random trait $x\in [0,+\infty)$ -- time to its
branching (fission or division). Then one of the basic acts of the
dynamics is \emph{aging} - diminishing of the traits with unit
speed. At point $x=0$, the particle divides into two progenies with
randomly distributed traits -- the second basic act of its dynamics.
Finally, during the whole lifetime each particle is subject to a
random death, which in the case of tumor cells can be caused by
therapeutic factors. The main questions concerning this model which
we address here are: (a) can one expect (and under which conditions)
that the population dynamics is \emph{honest}; (b) what might be a
condition for the boundedness in time of the population mean size.
The mentioned honesty of the dynamics means that the population
remains in time almost surely finite (no explosion). In the language
of semigroups of positive operators, cf. \cite{hon,Banasiak}, this
means that the semigroup of operators mapping the initial
probability distribution of the population traits on those
corresponding to $t>0$ is \emph{stochastic}. The mentioned
boundedness condition ought to involve the death rate and cell cycle
distribution parameters. Its practical meaning might be estimating
at which level of the therapeutic pressure the tumor cell population
stops growing ad infinitum. This aspect of the theory is indeed
practical since, for various kinds of tumors, the cell cycle
distributions and their parameters are known, see \cite{Marz,Kimm}
and also \cite{Gabriel,Tyson,Ye}.

The rest of the paper is organized as follows. In Section 2, we
introduce the necessary mathematical framework and then two models,
of which the second one is the principal model mentioned above. The
introduction of this model is preceded by a careful investigation of
its `mild' version, in which the particles instead of fission just
disappear at the edge. This turns useful in the subsequent study of
the principal model. In Section 2, we also formulate the main result
-- Theorem \ref{1tm}. Its proof is performed in Section 3, whereas
concluding remarks are placed in Section 4.

\section{The Model and the Result}

We begin by providing necessary notions and facts. Then we introduce
an auxiliary model, the advantage of which is that it is soluble.
This allows us to clarify a number of properties of the principal
model introduced afterwards. Next, we formulate the result as
Theorem \ref{1tm}.

\subsection{Preliminaries}
In this work, we use the following standard notations:
$\mathds{R}_{+} = (0,+\infty)$, $\bar{\mathds{R}}_{+} =[0,+\infty)$,
$\mathds{N}_0=\mathds{N}\cup \{0\}$, $\mathds{N}$ stands for the set
of positive integers. For a Banach space, $(\mathcal{E},
\|\cdot\|_{\mathcal{E}})$, with a cone of positive elements,
$\mathcal{E}^{+}$, a $C_0$-semigroup $S=\{S(t)\}_{t\geq 0}$ of
bounded linear operators $S(t):\mathcal{E}\to \mathcal{E}$ is called
sub-stochastic (resp. stochastic) if, for each $t\geq 0$, $S(t) :
\mathcal{E}^{+} \to \mathcal{E}^{+}$ and $\|S(t)
u\|_{\mathcal{E}}\leq 1$ (resp. $\|S(t) u\|_{\mathcal{E}}= 1$)
holding for all $u\in \{ u \in \mathcal{E}^{+}: \|u\|_{\mathcal{E}}
=1\}$.

By $\Gamma$ we denote the set of all finite subset of
$\bar{\mathds{R}}_{+}$. Its elements are finite {\it
configurations}.  This set is equipped with the weak topology, see
\cite{DV1}, which is metrizable in such a way that the corresponding
metric space is separable and complete. Namely, a sequence,
$\{\gamma_n\}_{n\in \mathds{N}}\subset \Gamma$, is convergent in
this topology to some $\gamma\in \Gamma$ if
\[
 \sum_{x\in \gamma_n} g(x) \to \sum_{x\in \gamma} g(x),
\]
that holds for all bounded continuous functions
$g:\bar{\mathds{R}}_{+} \to \mathds{R}$. Let $\mathcal{B}(\Gamma)$
be the corresponding Borel $\sigma$-field. Then $(\Gamma,
\mathcal{B}(\Gamma))$ is a standard Borel space. A function,
$f:\Gamma \to \mathds{R}$, is measurable if and only if there exists
a family of symmetric Borel functions $f^{(n)}:
\bar{\mathds{R}}_{+}^n \to \mathds{R}$, $n\in \mathds{N}_0$ such
that $f(\varnothing ) = f^{(0)}\in \mathds{R}$ and
\begin{equation}
 \label{0}
f(\{x_1, \dots , x_n\}) = f^{(n)} (x_1, \dots , x_n), \qquad n\in
\mathds{N}.
 \end{equation}
Note that $f^{(n)}$ are defined up to their values at points of
coincidence $x_i=x_j$. However, this makes no problem as we are
going to deal with $L^1$-like spaces, elements of which are defined
up to sets of (Lebesgue)-measure zero.

To simplify our notations, in expressions like $\gamma \cup x$,
$x\in \bar{\mathds{R}}_{+}$ we consider $x$ as a single-element
configuration $\{x\}$. The Lebesgue-Poisson measure $\lambda$ on
$(\Gamma, \mathcal{B}(\Gamma))$ is defined by the integrals
\begin{eqnarray}
  \label{L1}
&  &  \int_{\Gamma} f(\gamma) \lambda (d \gamma) = f^{(0)} +
  \sum_{n=1}^\infty \frac{1}{n!} \int_{\bar{\mathds{R}}_{+}^n}
  f^{(n)} (x_1 , \dots , x_n) d x_1 \cdots x_n ,
\end{eqnarray}
holding for all bounded measurable $f:\Gamma\to \mathds{R}$. Such
integrals have the following evident property which we will use
throughout the whole paper
\begin{equation}
  \label{La}
  \int_{\Gamma} \left(\sum_{\xi \subset \gamma} f(\gamma, \xi)
  \right) \lambda ( d \gamma) = \int_{\Gamma}\int_{\Gamma} f
  (\gamma\cup \xi, \xi) \lambda ( d \gamma) \lambda ( d \xi).
\end{equation}
Let $h:\Gamma\to \mathds{R}_{+}$ be separated away from zero and
such that
\begin{equation}
  \label{10}
  \int_{\Gamma} |f(\gamma)| \lambda (d \gamma) \leq \int_{\Gamma}h(\gamma) |f(\gamma)| \lambda (d
  \gamma)=: \|f\|_h,
\end{equation}
holding for all $f\in\mathcal{X}:= L^1 (\Gamma, d\lambda)$. Then we
set $\mathcal{X}_h= L^1 (\Gamma,h d\lambda)$ and equip it with the
norm defined in (\ref{10}). By $\|\cdot \|$ we will denote the norm
of $\mathcal{X}$, and $\mathcal{X}^{+}$, $\mathcal{X}^{+}_h$ will
stand for the cones of positive elements of $\mathcal{X}$ and
$\mathcal{X}_h$, respectively. Note that
\begin{equation}
 \label{La1}
 \mathcal{X}_h \hookrightarrow \mathcal{X}, \qquad \mathcal{X}^{+}_h \hookrightarrow \mathcal{X}^{+},
\end{equation}
where $\hookrightarrow$ denotes continuous embedding. Clearly,
$\mathcal{X}_h$ and $\mathcal{X}^{+}_h$ are dense in $\mathcal{X}$
and $\mathcal{X}^{+}$, respectively.

For a given $n\in \mathds{N}$, by $W^{1,1}(\mathds{R}_{+}^n)$ we
denote the standard Sobolev space \cite{Maz}, whereas $W^{1,1}_{\rm
s}(\mathds{R}_{+}^n)$ will stand for its subset consisting of all
symmetric $u$, i.e., such that $u(x_1 , \dots , x_n) =
u(x_{\sigma(1)}, \dots x_{\sigma(n)})$ holding for all permutations
$\sigma \in \varSigma_n$.

\begin{remark}
 \label{1rk}
It is known, cf. \cite[Theorem 1, page 4]{Maz}, that each element of
$W^{1,1}_{\rm s}(\mathds{R}_{+}^n)$ -- as an equivalence class --
contains a unique (symmetric) $u:\bar{\mathds{R}}^n_{+} \to
\mathds{R}$ such that
\begin{itemize}
\item[(a)] for Lebesgue-almost all $(x_1, \dots, x_{n-1})$, the map $\bar{\mathds{R}}_{+}\ni y \mapsto u(y, x_1 \dots , x_{n-1})$ is continuous and its restriction to
$\mathds{R}_{+}$ is absolutely continuous;
\item[(b)] the following holds
\begin{equation}
  \label{8}
  \int_{\mathds{R}_{+}^n}\left| \frac{\partial}{\partial x_1} u(x_1 , \dots , x_n)\right| dx_1 \cdots d
  x_n < \infty.
\end{equation}
\end{itemize}
In the sequel, we will mean this function $u$ when speaking of a
given element of $W^{1,1}_{\rm s}(\mathds{R}_{+}^n)$.
\end{remark}
For such $u$, set
\begin{equation}
  \label{9}
  k_u (x) = \int_{\mathds{R}_{+}^{n-1}} u(x, x_1, \dots , x_{n-1})
  dx_1 \cdots d x_{n-1}, \qquad x \in \bar{\mathds{R}}_{+}.
\end{equation}
Then $k_u \in W^{1,1}(\mathds{R}_{+})$.

As mentioned above, see (\ref{0}), each measurable $f:\Gamma \to
\mathds{R}$ defines symmetric Borel functions $f^{(n)}:
\bar{\mathds{R}}^n_{+}\to \mathds{R}$, $n\in \mathds{N}_0$. Let
$f\in \mathcal{X}$ be such that each $f^{(n)}$ belongs to the
corresponding $W^{1,1}_{\rm  s}(\mathds{R}^n_{+})$. Set
\begin{eqnarray}
  \label{11}
  (D f)^{(n)}(x_1, \dots , x_n) & = & \sum_{j=1}^n
  \frac{\partial}{\partial x_j} f^{(n)} (x_1, \dots , x_n) \\[.2cm]
  & = & \frac{d}{dt} f^{(n)} (x_1+t , \dots , x_n+t )|_{t=0}.
  \nonumber
\end{eqnarray}
Then by $\mathcal{W}$ we denote the subset of $\mathcal{X}$
consisting of all those $f$ for which $f^{(n)}\in W^{1,1}_{\rm s}
(\mathds{R}^n)$ and the following holds
\begin{eqnarray}
  \label{12}
  \| D f\|&:=& \sum_{n=1}^\infty \frac{1}{n!} \int_{\mathds{R}_{+}^n} \sum_{j=1}^n \left| \frac{\partial}{\partial x_j}
  f^{(n)}
  (x_1 ,\dots , x_n) \right| d x_1 \cdots dx_n \\[.2cm] \nonumber & = &  \sum_{n=0}^\infty \frac{1}{n!} \int_{\mathds{R}_{+}^{n+1}} \left| \frac{\partial}{\partial x}
  f^{(n+1)}
  (x, x_1 ,\dots , x_n) \right| d x d x_1 \cdots dx_n <\infty.
\end{eqnarray}
Note that the key point here is the convergence of the series.
 For $\gamma\in \Gamma$, set
$\gamma_t= \{x+t: x\in\gamma\}$, i.e., $\gamma_t$ is a shift of
$\gamma$. Obviously, $\gamma_t\in \Gamma$ for $t>0$. In the sequel,
we will use such shifts also with negative $t$ in the situations
where all $x+t \geq 0$. By (\ref{11}) and (\ref{12}), for $f\in
\mathcal{W}$, we have
\begin{gather}
  \label{12a}
  (Df)(\gamma) =  \frac{d}{dt} f(\gamma_t)|_{t=0}, \\[.2cm] \nonumber
  f(\gamma_t)  =  f(\gamma) + \int_0^t (D f) (\gamma_\tau)  d\tau.
\end{gather}
For $f\in \mathcal{W}$, we then set
\begin{equation}
 \label{12A}
\|f\|_{\mathcal{W}} = \|f \| + \|Df\|.
\end{equation}
\begin{proposition}
  \label{1pn}
The set $\mathcal{W}$ equipped with the norm defined in (\ref{12A})
is a Banach space. Thus, the linear operator $(D, \mathcal{W})$
defined on $\mathcal{X}$ in
  (\ref{11}) and (\ref{12}) is closed.
\end{proposition}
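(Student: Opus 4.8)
The plan is to prove that $\mathcal{W}$ is complete by showing it is a closed subspace of the Banach space $\mathcal{X}$ with respect to the graph norm of $D$; equivalently, to show directly that every Cauchy sequence in $(\mathcal{W}, \|\cdot\|_{\mathcal{W}})$ converges to an element of $\mathcal{W}$. Since $\|f\|_{\mathcal{W}} = \|f\| + \|Df\|$, a Cauchy sequence $\{f_m\}$ in $\mathcal{W}$ is Cauchy both in $\mathcal{X}$ (so $f_m \to f$ in $\mathcal{X}$ for some $f$) and such that $\{Df_m\}$ is Cauchy in $\mathcal{X}$ (so $Df_m \to g$ in $\mathcal{X}$ for some $g$). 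The entire content of the proposition is the identification $f \in \mathcal{W}$ and $Df = g$, which is exactly the assertion that the operator $(D,\mathcal{W})$ is closed — as the second sentence of the statement makes explicit. So the two claims are really one.

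The key structural observation I would exploit is that the norms decompose componentwise across the decomposition $f \leftrightarrow (f^{(n)})_{n \geq 0}$ induced by (0). From (L1) and (12) we have $\|f\| = \sum_n \frac{1}{n!}\|f^{(n)}\|_{L^1(\bar{\mathds{R}}_+^n)}$ and $\|Df\| = \sum_n \frac{1}{n!}\|(Df)^{(n)}\|_{L^1}$, where $(Df)^{(n)} = \sum_{j=1}^n \partial_{x_j} f^{(n)}$. First I would fix $n$ and pass to the level of individual components. From the Cauchy property at the level of the full norm I would extract, for each fixed $n$, that $\{f_m^{(n)}\}_m$ is Cauchy in $L^1(\bar{\mathds{R}}_+^n)$ and that $\{\sum_j \partial_{x_j} f_m^{(n)}\}_m$ is Cauchy in $L^1$. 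I would then invoke standard Sobolev-space completeness: I want each limit component $f^{(n)}$ to lie in $W^{1,1}_{\mathrm{s}}(\mathds{R}_+^n)$, with the directional derivative $\sum_j \partial_{x_j} f^{(n)}$ equal to the $L^1$-limit of the $\sum_j \partial_{x_j} f_m^{(n)}$. This is where I would use the characterization via absolute continuity along the diagonal direction from Remark \ref{1rk} together with (12a): since (12a) expresses the directional derivative as the genuine one-sided $t$-derivative of $f(\gamma_t)$, I can recover $f^{(n)}$ along almost every characteristic line $t \mapsto (x_1+t,\dots,x_n+t)$ by integrating the limit $g^{(n)}$, matching the fundamental-theorem-of-calculus representation in (12a).

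The main obstacle — and the only place where genuine care beyond bookkeeping is needed — is the interchange of the limit $m \to \infty$ with the infinite sum over $n$, i.e., controlling the tail of the series in (12) uniformly in $m$. Componentwise convergence of each $f^{(n)}_m$ and each derivative does not by itself give $\|Df_m - g\| \to 0$ in the full norm, because the series could lose mass at infinity in $n$. The standard device I would use is that a Cauchy sequence in the summed $\ell^1$-type norm $\sum_n \frac{1}{n!}\|\cdot\|_{L^1}$ has, by definition of that norm, uniformly small tails: given $\varepsilon$, there is $N$ with $\sum_{n>N}\frac{1}{n!}\|(Df_m)^{(n)} - (Df_k)^{(n)}\|_{L^1} < \varepsilon$ for all $m,k$, and letting $k \to \infty$ componentwise (using the finite-$N$ convergence already established) transfers this uniform tail bound to the limit $g$. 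This gives $g \in \mathcal{X}$ with the right norm and $Df_m \to g$ in $\mathcal{X}$. Combining with the componentwise identification $Df = g$ from the previous step yields $f \in \mathcal{W}$ and $Df = g$, establishing both completeness and closedness simultaneously.
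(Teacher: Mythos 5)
There is a genuine gap here, and it originates in a misreading of the norm (\ref{12}). You set $\|Df\|=\sum_{n}\frac{1}{n!}\|(Df)^{(n)}\|_{L^1}$ with $(Df)^{(n)}=\sum_{j}\partial_{x_j}f^{(n)}$, i.e.\ with the absolute value \emph{outside} the sum over $j$, so that only the diagonal directional derivative is controlled; this is also the only reading under which your remark that completeness and closedness ``are really one'' (i.e., that $\|\cdot\|_{\mathcal{W}}$ is the graph norm of $D$) would be literally correct. But in (\ref{12}) the absolute value sits \emph{inside} the sum over $j$: by the symmetry of $f^{(n)}$,
\[
\|Df\|=\sum_{n=1}^{\infty}\frac{1}{n!}\int_{\mathds{R}_{+}^{n}}\sum_{j=1}^{n}\left|\frac{\partial}{\partial x_j}f^{(n)}\right|dx_1\cdots dx_n
=\sum_{n=1}^{\infty}\frac{1}{(n-1)!}\int_{\mathds{R}_{+}^{n}}\left|\frac{\partial}{\partial x_1}f^{(n)}\right|dx_1\cdots dx_n ,
\]
which is the second line of (\ref{12}) and has no analogue for your version (cancellations among the $\partial_{x_j}f^{(n)}$ make your quantity strictly smaller in general). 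Hence $\|\cdot\|_{\mathcal{W}}$ controls \emph{every individual} partial derivative: up to the weights $1/n!$, it is the $\ell^{1}$-sum of the full $W^{1,1}(\mathds{R}_{+}^{n})$-norms of the components. Your extraction gives only Cauchyness of the directional derivatives, and then ``standard Sobolev-space completeness'' cannot be invoked, because that theorem needs each $\partial_{x_j}f^{(n)}_m$ to be Cauchy in $L^{1}$. Nor does your fallback repair this: integrating the limit $g^{(n)}$ along characteristics via (\ref{12a}) shows only that $f^{(n)}$ is absolutely continuous along the diagonal direction with directional derivative $g^{(n)}$; it does not give $f^{(n)}\in W^{1,1}_{\rm s}(\mathds{R}_{+}^{n})$, which is part of the definition of $\mathcal{W}$. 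The failure is real: let $g$ be even, bounded and nowhere differentiable (a Weierstrass function), $g_k$ its smooth even partial sums, and $h$ smooth with compact support in $\mathds{R}_{+}$; the symmetric functions $u_k(x,y)=g_k(x-y)h(x+y)$ converge in $L^{1}$, their directional derivatives $(\partial_x+\partial_y)u_k=2g_k(x-y)h'(x+y)$ converge in $L^{1}$, yet the limit $u(x,y)=g(x-y)h(x+y)$ is not in $W^{1,1}$: otherwise, on a.e.\ horizontal line meeting $\{h\neq 0\}$, the map $x\mapsto g(x-y)$ would be absolutely continuous, hence differentiable a.e. Viewed as two-particle components, the $u_k$ are Cauchy for your norm while their limit leaves $\mathcal{W}$; so with your reading the identification step collapses.

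Under the correct reading of (\ref{12}) the difficulty disappears and your plan reduces to the paper's proof: a $\|\cdot\|_{\mathcal{W}}$-Cauchy sequence has each component Cauchy in the genuine $W^{1,1}(\mathds{R}_{+}^{n})$-norm, completeness of the Sobolev space applies componentwise (no characteristics, no fundamental theorem of calculus needed), the limits are automatically symmetric, and it only remains to control the series over $n$. For that last step the paper bounds the partial sums, $\|f\|_{N}\leq 2C$ with $C=\sup_m\|Df_m\|$, concluding $f\in\mathcal{W}$; your uniform-tail argument is in fact the more careful version of this step, since it yields $\|f_m-f\|_{\mathcal{W}}\to 0$ rather than just $f\in\mathcal{W}$, and is worth keeping. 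But as written, the central identification step of your proposal does not prove that the limit belongs to $\mathcal{W}$.
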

\begin{proof}
Let $\{f_m\}_{m\in \mathds{N}}\subset \mathcal{W}$ be a Cauchy
sequence in $\|\cdot \|_{\mathcal{W}}$. For each $f_m$, let
$f^{(n)}_m$, $n\in \mathds{N}_0$ be defined as in (\ref{0}). Then,
for each $n\in \mathds{N}_0$, $\{f_m^{(n)}\}_{m\in
\mathds{N}}\subset W^{1,1}_{\rm s}(\mathds{R}^n)$ is a Cauchy
sequence in the Sobolev space $W^{1,1}(\mathds{R}^n)$. Let $f^{(n)}$
be its limit, which exists as $W^{1,1}(\mathds{R}^n)$ is complete.
Clearly, $f^{(n)}$ is symmetric, i.e., $f^{(n)}\in W^{1,1}_{\rm
s}(\mathds{R}^n)$. Since $\{f_m\}_{m\in \mathds{N}}$ is a Cauchy
sequence in $\mathcal{X}$, it converges there to some $f\in
\mathcal{X}$ such that its $f^{(n)}$ are the limits of the sequences
$\{f_m^{(n)}\}_{m\in \mathds{N}}$ as just discussed. By Remark
\ref{1rk} these $f^{(n)}$ satisfy (\ref{8}); hence, for all $N\in
\mathds{N}$, the following holds
\begin{equation*}
\|f\|_{N}:= \sum_{n=1}^N \frac{1}{n!} \sum_{j=1}^n \left|
\frac{\partial}{\partial x_j} f^{(n)} (x_1 , \dots , x_n)\right| d
x_1 \cdots d x_n < \infty.
\end{equation*}
Let us then show that the sequence $\{\|f\|_{N}\}_{N\in \mathds{N}}$
is bounded, and thus $f$ lies in $\mathcal{W}$. Set $C =
\sup_{m}\|Df_m\|$. Then, for each $m\in \mathds{N}$, by the triangle
inequality we have that
\[
\|f\|_{N} \leq C + \|f-f_m\|_{N} \leq 2 C.
\]
The second inequality holds for a fixed $N$ and $m>m_N$ for an
appropriate $m_N$. This yields the proof of the first part of the
statement. Then the closedness follows by the fact that (\ref{12A})
is exactly the graph norm of $(D,\mathcal{W})$.
\end{proof}
\begin{corollary}
  \label{1pnco}
The operator $(D,\mathcal{W})$ is the generator of a sub-stochastic
semigroup, $S_0 = \{S_0(t)\}_{t\geq 0}$, on $\mathcal{X}$ such that
$(S_0(t) f)(\gamma) = f(\gamma_t)$. Hence, $\|S_{0}(t) f - f\| \to
0$ as $t\to 0^{+}$ for each $f\in\mathcal{X}$.
\end{corollary}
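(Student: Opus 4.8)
The plan is to build $S_0$ by hand as the drift (shift) semigroup, check that it is a sub-stochastic $C_0$-semigroup, and only then identify its generator with $(D,\mathcal{W})$; the last identification is where the real difficulty sits.

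\emph{Step 1 (the shift is a sub-stochastic $C_0$-semigroup).} Define $(S_0(t)f)(\gamma)=f(\gamma_t)$, i.e. $(S_0(t)f)^{(n)}(x_1,\dots,x_n)=f^{(n)}(x_1+t,\dots,x_n+t)$. The semigroup law is immediate from $\gamma_{t+s}=(\gamma_t)_s$, and positivity $S_0(t):\mathcal{X}^{+}\to\mathcal{X}^{+}$ is clear. Each $S_0(t)$ is a contraction: the substitution $y_j=x_j+t$ in every component gives
\[
\|S_0(t)f\| = \sum_{n=0}^{\infty}\frac{1}{n!}\int_{[t,\infty)^n}\bigl|f^{(n)}(y_1,\dots,y_n)\bigr|\,dy_1\cdots dy_n \le \|f\|,
\]
so $S_0$ is sub-stochastic. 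Writing $\mathbf{1}=(1,\dots,1)$, strong continuity on all of $\mathcal{X}$ follows by dominated convergence over the Lebesgue--Poisson series: each term $\tfrac{1}{n!}\|f^{(n)}(\cdot+t\mathbf{1})-f^{(n)}\|_{L^1}$ tends to $0$ as $t\to0^{+}$ by continuity of translations in $L^1(\mathds{R}_+^n)$, and is dominated by $\tfrac{2}{n!}\|f^{(n)}\|_{L^1}$, whose sum is $2\|f\|<\infty$. This already gives the final assertion $\|S_0(t)f-f\|\to0$ for every $f\in\mathcal{X}$.

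\emph{Step 2 (the generator $A$ extends $(D,\mathcal{W})$).} For $f\in\mathcal{W}$ the scalar identity in (\ref{12a}) reads $f(\gamma_t)-f(\gamma)=\int_0^t(Df)(\gamma_\tau)\,d\tau$. Since $Df\in\mathcal{X}$ and $\tau\mapsto S_0(\tau)Df$ is $\mathcal{X}$-continuous (Step 1), the right-hand side is the pointwise realization of the $\mathcal{X}$-valued Bochner integral, whence
\[
S_0(t)f-f=\int_0^t S_0(\tau)(Df)\,d\tau \qquad\text{in }\mathcal{X}.
\]
Dividing by $t$ and letting $t\to0^{+}$, strong continuity forces $t^{-1}(S_0(t)f-f)\to Df$ in $\mathcal{X}$. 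Hence $f\in\mathrm{Dom}(A)$ with $Af=Df$, i.e. $D\subseteq A$.

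\emph{Step 3 (the reverse inclusion, and the main obstacle).} For $\mu>0$ the resolvent of the contraction semigroup is $R(\mu)h=\int_0^\infty e^{-\mu r}S_0(r)h\,dr$, that is $\bigl(R(\mu)h\bigr)^{(n)}(x)=\int_0^\infty e^{-\mu r}h^{(n)}(x_1+r,\dots,x_n+r)\,dr$, and $R(\mu):\mathcal{X}\to\mathrm{Dom}(A)$ is a bijection. Because $\mathcal{W}\subseteq\mathrm{Dom}(A)$ by Step 2, to obtain $\mathcal{W}=\mathrm{Dom}(A)$ (hence $D=A$) it suffices to show $R(\mu)\mathcal{X}\subseteq\mathcal{W}$, for then $\mathrm{Dom}(A)=R(\mu)\mathcal{X}\subseteq\mathcal{W}\subseteq\mathrm{Dom}(A)$. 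Setting $f=R(\mu)h$, integration by parts in $r$ (differentiation along the diagonal) yields at once
\[
\sum_{j=1}^{n}\frac{\partial}{\partial x_j}f^{(n)}=\mu f^{(n)}-h^{(n)}\in L^1 ,
\]
so the \emph{diagonal} derivative is controlled and $(\mu-D)f=h$. The delicate point — the step I expect to be the main obstacle — is the genuine Sobolev regularity $f^{(n)}\in W^{1,1}_{\mathrm s}(\mathds{R}_+^n)$, which demands that each \emph{individual} partial $\partial_{x_j}f^{(n)}$, not merely the diagonal combination, lie in $L^1$, together with summability of the series in (\ref{12}). Control of a single transverse partial cannot be read off from the diagonal derivative; it must instead come from a separate integration-by-parts argument that exploits the inward drift toward the edge, converting the transverse derivative into boundary contributions along $\partial\bar{\mathds{R}}_+^n$ that are estimated in terms of $h^{(n)}$, after which the $1/n!$ weights and symmetry give convergence of (\ref{12}). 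Granting this regularizing property of $R(\mu)$, Step 2 together with the bijectivity of $R(\mu)$ yields $\mathcal{W}=\mathrm{Dom}(A)$ and $D=A$, completing the identification of the generator.
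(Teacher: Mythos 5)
Your Steps 1 and 2 are correct, and they are in fact more self-contained than the paper's own argument: the paper does not prove strong continuity directly, but instead invokes the closedness of $(D,\mathcal{W})$ (Proposition \ref{1pn}), the resolvent bound $\|R_\varkappa(D)f\|\le\|f\|/\varkappa$, and the Hille--Yosida theorem from \cite{Pazy}, whereas your dominated-convergence argument along the Lebesgue--Poisson series and your Bochner-integral argument for $D\subseteq A$ (via (\ref{12a})) are complete as written.

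The genuine gap is Step 3, and the property you ``grant'' there is not merely unproved --- it is false. The resolvent $R(\mu)$ integrates $h$ along the diagonal ray only, so it cannot create any regularity transverse to the diagonal, and no integration by parts at the boundary can help, because the obstruction sits in the interior of $\mathds{R}_{+}^n$. Concretely, let $h\in\mathcal{X}$ have the single nonzero component $h^{(2)}(x_1,x_2)=|x_1-x_2|^{-1/2}\mathds{1}_{\{|x_1-x_2|\le 1\}}\,\phi(x_1+x_2)$, where $\phi\in C_c^\infty$ is nonnegative, supported in $[1,2]$ and positive on $(1,2)$; this $h^{(2)}$ is symmetric and integrable, so $h\in\mathcal{X}$. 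Then $(R(\mu)h)^{(2)}(x_1,x_2)=|x_1-x_2|^{-1/2}\mathds{1}_{\{|x_1-x_2|\le 1\}}\,\Phi_\mu(x_1+x_2)$ with $\Phi_\mu(v)=\int_0^\infty e^{-\mu r}\phi(v+2r)\,dr$ smooth and strictly positive for $v<2$. On almost every line parallel to the $x_1$-axis that crosses the diagonal inside $\{1<x_1+x_2<3/2\}$, this function is essentially unbounded near the crossing point, whereas a $W^{1,1}$ function must agree a.e.\ with a function absolutely continuous (hence locally bounded) on almost every such line. Thus $(R(\mu)h)^{(2)}\notin W^{1,1}_{\rm s}(\mathds{R}_{+}^2)$, i.e.\ $R(\mu)\mathcal{X}\not\subseteq\mathcal{W}$; and since $R(\mu)h\in\mathrm{Dom}(A)$ automatically, the same example shows $\mathcal{W}\subsetneq\mathrm{Dom}(A)$. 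So with $\mathcal{W}$ defined through componentwise $W^{1,1}$-regularity as in (\ref{12}), the identification $\mathrm{Dom}(A)=\mathcal{W}$ that Step 3 aims at is unattainable: the domain of the generator is characterized by exactly the quantity your diagonal computation controls, namely integrability of $\sum_j\partial_{x_j}f^{(n)}=\mu f^{(n)}-h^{(n)}$ along the flow, not of each partial separately. What is salvageable is either the statement that $S_0$ is generated by the closure of $(D,\mathcal{W})$ (which holds because $\mathcal{W}$ is dense and $S_0$-invariant, hence a core), or the original statement with $\mathcal{W}$ re-defined as the set of $f$ absolutely continuous along $\gamma\mapsto\gamma_t$ with diagonal derivative in $\mathcal{X}$. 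You should be aware that the paper's own proof is silent on precisely the same point --- it never verifies that $R_\varkappa(D)$ maps $\mathcal{X}$ into $\mathcal{W}$, which is what the surjectivity of $\varkappa-D$ required by Hille--Yosida amounts to --- so you correctly isolated the crux of the matter; but it cannot be resolved along the route you propose.
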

\begin{proof}
Clearly, $(S_0(t)f)(\gamma_s)= f(\gamma_{t+s})$ and the map
$f\mapsto S_0(t)f$ is positivity-preserving and such that
$\|S_0(t)f\| \leq \|f\|$. Then $S_0$ is a positive semigroup
generated by $(D,\mathcal{W})$, see (\ref{12a}). Set
 \begin{equation*}
  (R_\varkappa (D)f)(\gamma) = \int_{0}^{+\infty} e^{-\varkappa t} f(\gamma_t) d t, \qquad \varkappa>0.
 \end{equation*}
Then $\| R_\varkappa(D) f\| \leq {1}/{\varkappa}$, by which and
Proposition \ref{1pn}  $S_0$ is a $C_0$-semigroup, see, e.g.,
\cite[Theorem 3.1, page 8]{Pazy}. This completes the proof.
\end{proof}

\subsection{A soluble model}

As mentioned above, our principal model is a modification of another
model, the main advantage of which is that it is in a sense soluble.
We will use this fact in studying the principal model below. This
soluble model describes the following process. A finite cloud of
point particles is distributed over $\bar{\mathds{R}}_{+}$. Each
particle in the cloud moves towards the origin with unit speed, and
disappears at $x=0$. The states of the cloud are probability
measures on $\mathcal{X}$, which we assume to be absolutely
continuous with respect to $\lambda$ defined in (\ref{L1}). Set
$\Gamma_t= \{\gamma \in \Gamma: \gamma \subset [0,t)\}$, and
$\Gamma_t^c= \{\gamma \in \Gamma: \gamma \subset [t,+\infty)\}$,
$t\geq 0$. Note that $\Gamma_t \cup \Gamma_t^c \neq \Gamma$. Let
$f_t$ be the density (Radon-Nikodym derivative) of the state at time
$t$, and $f$ be the density of the initial state. As described
above, the cloud undergoes the evolution according to the following
formula
\begin{equation}
  \label{2}
  f_t (\gamma) = \int_{\Gamma_t} f(\gamma_t \cup \xi) \lambda (
  d\xi) = \int_{\Gamma}f(\gamma_t \cup \xi)\mathds{1}_{\Gamma_t}(\xi) \lambda (
  d\xi).
\end{equation}
Here and in the sequel, by $\mathds{1}$ we denote the corresponding
indicator. Let us show that $\{f_t\}_{t\geq 0}$ has the flow
property
\begin{equation}
  \label{3}
  f_{t +s}(\gamma) = \int_{\Gamma_t} f_s(\gamma_t \cup \xi) \lambda (
  d\xi), \qquad s, t >0.
\end{equation}
To this end we write
\[
 \mathds{1}_{\Gamma_t} (\xi) = \prod_{x\in \xi} \mathds{1}_{[0,t)} (x),
\]
express $f_s$ in the right-hand side of (\ref{3}) by (\ref{2}), and
then obtain
\begin{eqnarray*}
{\rm RHS(\ref{3})} & = & \int_{\Gamma}\int_{\Gamma}
\left( \prod_{x\in \eta}\mathds{1}_{[0,s)} (x) \right) \left( \prod_{y\in \xi}\mathds{1}_{[s,s+t)} (y) \right) f(\gamma_{t+s} \cup\xi \cup \eta) \lambda ( d \xi) \lambda ( d \eta)  \qquad \qquad \\[.2cm] & = & \int_{\Gamma} f (\gamma_{t+s} \cup \eta) \left[\sum_{\xi \subset \eta}
\left( \prod_{y\in \xi}\mathds{1}_{[s,s+t)} (y)\right)\left(
\prod_{x\in \eta\setminus \xi}\mathds{1}_{[0,s)} (x)\right)\right]
\lambda ( d \eta) \\[.2cm] & = & \int_{\Gamma} f (\gamma_{t+s} \cup \eta) \left[ \prod_{x\in \eta}
\left( \mathds{1}_{[0,s)} (x) + \mathds{1}_{[s,s+t)} (x)\right)\right] \lambda ( d \eta) \\[.2cm]
& = & \int_{\Gamma_{t+s}} f (\gamma_{t+s} \cup \eta) \lambda ( d
\eta) = {\rm LHS(\ref{3})}.
\end{eqnarray*}
Let $|\gamma|$ denote the number of points in $\gamma\in \Gamma$.
For $f$ as in (\ref{2}), we define
\begin{equation*}
 N_l := \int_\Gamma |\gamma|^l f(\gamma) \lambda ( d \gamma), \qquad l\in \mathds{N}_0.
\end{equation*}
Note that $N_1$ is just the expected number of points in the cloud
as time $t=0$. Note also that $N_0=1$ in view of the normalization
of $f$. Let us prove that
\begin{equation}
 \label{3b}
 N_l (t):= \int_\Gamma |\gamma|^l f_t(\gamma) \lambda ( d \gamma) \leq N_l.
\end{equation}
Indeed, by (\ref{2}) we have
\begin{eqnarray}
\label{3c} N_l (t)& = & \int_\Gamma \int_{\Gamma} |\gamma|^l
f(\gamma_t \cup \xi) \mathds{1}_{\Gamma_t} (\xi) \lambda ( d \xi)
\lambda ( d \gamma)
\\[.2cm]\nonumber & = & \int_{\Gamma_t^c} \int_{\Gamma} \left|\gamma_{-t}\right|^l f(\gamma\cup \xi) \mathds{1}_{\Gamma_t}
(\xi) \lambda ( d \xi) \lambda ( d \gamma) \\[.2cm] \nonumber
& = & \int_{\Gamma} \int_{\Gamma} \left|\gamma_{-t}\right|^l
f(\gamma\cup \xi) \mathds{1}_{\Gamma_t} (\xi)\mathds{1}_{\Gamma_t^c}
(\gamma) \lambda ( d \xi) \lambda ( d \gamma)
\\[.2cm] \nonumber & = & \int_{\Gamma} f (\gamma)
\left( \sum_{\xi \subset \gamma} \left|(\gamma\setminus
\xi)_{-t}\right|^l \mathds{1}_{\Gamma_t} (\xi)
\mathds{1}_{\Gamma_t^c} (\gamma \setminus \xi)\right) \lambda ( d
\gamma).
\end{eqnarray}
For $t>0$,  we write $\gamma\in \Gamma$ in the form $\gamma =
\gamma_1^t \cup \gamma_2^t$ with $\gamma_1^t= \gamma \cap [0,t)$.
Then the sum in the last line of (\ref{3c}) has only one nonzero
term corresponding to $\xi = \gamma_1^t$. That is,
\[
 N_l(t) = \int_{\Gamma} |(\gamma^t_2)_{-t}|^l f(\gamma) \lambda (d\gamma) \leq \int_{\Gamma}|\gamma|^l f(\gamma) \lambda (d\gamma) = N_l,
\]
that yields (\ref{3b}). The latter yields also $N_0(t) = 1$, i.e.,
the map $f\mapsto f_t$ defined in (\ref{2}) preserves the norm.
Notably, $(\gamma_2^t)_{-t}$ is the part of the initial cloud that
remains after time $t$, shifted towards the origin. Its expected
cardinality thus cannot be bigger than that of the initial cloud,
that is reflected in the latter estimate.
\begin{proposition}
 \label{1apn}
 For each $f\in\mathcal{X}$, it follows that $\|f_t - f\|\to 0$ as $t\to 0^{+}$, where $f_t$ and $f$ are related to each other by (\ref{2}).
\end{proposition}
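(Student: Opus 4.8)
The plan is to split the evolution map $U_t\colon f\mapsto f_t$ of (\ref{2}) according to whether the integration variable $\xi\in\Gamma_t$ is empty or not, and to reduce everything to the strong continuity of the shift semigroup $S_0$ already obtained in Corollary \ref{1pnco}. First I would record that $U_t$ is a contraction of $\mathcal{X}$. Indeed $U_t$ is linear and positivity preserving, and the computation leading to $N_0(t)=1$ shows that $\|U_t g\|=\|g\|$ for every $g\in\mathcal{X}^{+}$; applying this to $|f|$ gives $\|U_t f\|\le \|U_t|f|\|=\||f|_t\|=\|f\|$ for arbitrary $f\in\mathcal{X}$. This uniform bound would in any case license the standard $\varepsilon/3$ reduction to a dense subclass such as $\mathcal{W}$, but the direct estimate below makes that unnecessary.

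Next I would write $f_t=S_0(t)f+R_tf$, where $(S_0(t)f)(\gamma)=f(\gamma_t)$ is the $\xi=\varnothing$ term and $(R_tf)(\gamma)=\int_{\Gamma_t\setminus\{\varnothing\}}f(\gamma_t\cup\xi)\,\lambda(d\xi)$ collects the rest. By the triangle inequality $\|f_t-f\|\le\|S_0(t)f-f\|+\|R_tf\|$, and the first summand tends to $0$ as $t\to0^{+}$ directly by Corollary \ref{1pnco}. For the remainder I would bound the integrand by its modulus and invoke the norm-preserving identity for $|f|\ge0$: the full double integral $\int_\Gamma\int_{\Gamma_t}|f(\gamma_t\cup\xi)|\,\lambda(d\xi)\,\lambda(d\gamma)$ equals $\||f|_t\|=\|f\|$, while its $\xi=\varnothing$ part equals $\int_\Gamma|f(\gamma_t)|\,\lambda(d\gamma)=\|S_0(t)|f|\|$. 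Subtracting the two yields
\[
\|R_tf\|\le \|f\|-\|S_0(t)|f|\|.
\]
Since $S_0$ is a $C_0$-semigroup, $\|S_0(t)|f|-|f|\|\to0$, hence $\|S_0(t)|f|\|\to\||f|\|=\|f\|$ and the right-hand side vanishes as $t\to0^{+}$. Combining the two bounds gives $\|f_t-f\|\to0$.

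The one point that carries the whole argument is the exact equality $\||f|_t\|=\|f\|$, already established through $N_0(t)=1$: it converts the crude bound on $R_tf$ into the deficit $\|f\|-\|S_0(t)|f|\|$ of the sub-stochastic shift. That deficit is precisely the mass of initial configurations possessing a point in $[0,t)$ -- the points that reach the edge within time $t$ -- and it tends to $0$ by the integrability of $f$, equivalently by the strong continuity of $S_0$. I expect no genuine obstacle beyond keeping this bookkeeping of the empty and non-empty contributions straight.
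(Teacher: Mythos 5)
Your proof is correct. The skeleton coincides with the paper's: you split $f_t$ into the $\xi=\varnothing$ contribution $S_0(t)f$ and the remainder $R_tf$ over nonempty $\xi$, and dispose of the first term by Corollary \ref{1pnco}, exactly as the paper does (there the two pieces are called $I_1(t)$ and $I_2(t)$, after first reducing to positive $f$). Where you genuinely diverge is the remainder. The paper evaluates $I_2(t)$ explicitly by the combinatorial identity (\ref{La}), obtaining $I_2(t)=\int_\Gamma f(\gamma)\chi(\gamma_1^t)\,\lambda(d\gamma)$ --- the mass of initial configurations having a point in $[0,t)$ --- and then argues that this tends to $\mu(\Gamma_0)=0$, where $\Gamma_0=\{\gamma: 0\in\gamma\}$. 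You instead reuse the norm preservation $N_0(t)=N_0$ (already established via (\ref{3c})) together with additivity of the $L^1$-norm on positives to write $\|R_t|f|\|=\|f\|-\|S_0(t)|f|\|$, which vanishes by the strong continuity of $S_0$; this avoids both a second application of (\ref{La}) and the separate observation that $\mu(\Gamma_0)=0$. The two computations are really the same quantity seen from two sides --- by shift invariance, $\|f\|-\|S_0(t)|f|\|$ \emph{is} the mass of configurations meeting $[0,t)$ --- but your route is a bit more economical, trading the paper's explicit identification of the escaping mass (which has probabilistic content and foreshadows the generator computation in Proposition \ref{1bpn}) for a soft conservation-plus-continuity argument resting entirely on facts already proved before the proposition.
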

\begin{proof}
Clearly, it is enough to prove the statement for positive $f$ only.
By (\ref{2}) we have
\begin{eqnarray}
\label{3d}
 \|f_t - f\| & = & \int_{\Gamma} \left|f(\gamma) - \int_{\Gamma_t} f(\gamma_t \cup \xi) \lambda ( d \xi) \right| \lambda ( d \gamma)\\[.2cm] \nonumber & \leq & \int_{\Gamma} \left|f(\gamma) - f(\gamma_t) \right| \lambda ( d \gamma) \\[.2cm] \nonumber & + & \int_{\Gamma} \int_{\Gamma} f(\gamma_t \cup \xi) \mathds{1}_{\Gamma_t} (\xi) \chi(\xi) \lambda ( d \xi ) \lambda ( d \gamma) \\[.2cm] \nonumber & =: & I_1(t) + I_2 (t).
\end{eqnarray}
Here $\chi(\xi) = 0$ whenever $\xi = \varnothing$, and $\chi(\xi) =
1$ otherwise. By Corollary \ref{1pnco} we have that $I_1 (t) \to 0$
as $t\to 0^{+}$. To estimate $I_2(t)$ we proceed similarly as in
deriving (\ref{3c}). That is,
\begin{eqnarray}
\label{2e}
I_2(t) & = & \int_{\Gamma_t^c} \left( \int_{\Gamma} f(\gamma\cup\xi) \mathds{1}_{\Gamma_t} (\xi) \chi(\xi) \lambda ( d \xi)  \right)\lambda ( d \gamma)  \\[.2cm] \nonumber & = & \int_{\Gamma} \int_{\Gamma} f(\gamma\cup\xi) \mathds{1}_{\Gamma_t} (\xi) \chi(\xi) \mathds{1}_{\Gamma_t^c} (\gamma)\lambda ( d \xi) \lambda ( d \gamma) \\[.2cm] \nonumber & = & \int_{\Gamma} f(\gamma) \left( \sum_{\xi\subset \gamma} \mathds{1}_{\Gamma_t} (\xi) \chi(\xi) \mathds{1}_{\Gamma_t^c} (\gamma\setminus \xi) \right) \lambda ( d \gamma) \\[.2cm] \nonumber & = &  \int_{\Gamma} f(\gamma) \chi(\gamma^t_1) \lambda ( d \gamma).
\end{eqnarray}
Then $I_2 (t) \to \mu(\Gamma_0)$ where $\mu(d\gamma) = f(\gamma)
\lambda ( d \gamma)$ is the initial state and $\Gamma_0=\{\gamma\in
\Gamma: 0\in \gamma \}$. It is, however, obvious (see also the proof
of Proposition \ref{1bpn} below) that $\mu(\Gamma_0)=0$, that
completes the proof.
\end{proof}
Let $\mathcal{X}^l$, $l\in \mathds{N}$ stand for the Banach space
$\mathcal{X}_h$ with $h(\gamma)= 1+|\gamma|^l$.
\begin{corollary}
  \label{2pnco}
There exists a unique stochastic semigroup, $S^0=\{S^0(t)\}_{t\geq
0}$, on $\mathcal{X}$ such that, for each $f\in \mathcal{X}$,  $f_t=
S^0(t)f$, where $f_t$ and $f$ are the same as in (\ref{2}). The
semigroup $S^0$ leaves invariant each $\mathcal{X}^l$, $l\in
\mathds{N}$.
\end{corollary}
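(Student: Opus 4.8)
The plan is to take (\ref{2}) as the very definition of the operators, setting $S^0(t) f := f_t$ for $t \geq 0$ and $f \in \mathcal{X}$, with $S^0(0)=\mathrm{id}$ (which is what (\ref{2}) gives at $t=0$, since $\Gamma_0 = \{\varnothing\}$ and $\lambda(\{\varnothing\})=1$), and then to verify that the family so defined has all the required properties. First I would check that each $S^0(t)$ is a bounded linear operator on $\mathcal{X}$. Linearity is immediate, as the right-hand side of (\ref{2}) depends linearly on $f$. For the norm, note that for $f\in\mathcal{X}^{+}$ the density $f_t$ is again nonnegative, and the norm-preservation recorded after (\ref{3c}) gives $\|S^0(t)f\| = N_0(t) = N_0 = \|f\|$; for a general $f$ I would split $f = f^{+} - f^{-}$ and use the pointwise bound $|f_t| \leq (f^{+})_t + (f^{-})_t$, which yields $\|S^0(t) f\| \leq \|f^{+}\| + \|f^{-}\| = \|f\|$, hence $\|S^0(t)\| \leq 1$. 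The same observations show at once that $S^0$ is positivity-preserving and norm-preserving on $\{u\in\mathcal{X}^{+}: \|u\|=1\}$, which is precisely the stochasticity demanded in the statement.

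The next step is the algebraic semigroup law $S^0(t+s) = S^0(t) S^0(s)$, which is nothing but the flow property (\ref{3}) already established. To promote $\{S^0(t)\}_{t\geq 0}$ to a $C_0$-semigroup I would invoke Proposition \ref{1apn}, giving right-continuity at the origin: $\|S^0(t) f - f\| \to 0$ as $t \to 0^{+}$ for every $f$. Strong continuity at an arbitrary $t_0 > 0$ then follows by the standard contraction argument: for $h \to 0^{+}$,
\[
\|S^0(t_0 \pm h) f - S^0(t_0) f\| \leq \|S^0(h) f - f\|,
\]
where for the left increment one uses $t_0 - h \geq 0$ together with $\|S^0(t_0 - h)\| \leq 1$, and for the right increment $\|S^0(t_0)\| \leq 1$. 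Thus $t \mapsto S^0(t) f$ is continuous on $[0,+\infty)$ and $S^0$ is a stochastic $C_0$-semigroup.

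Uniqueness requires no separate argument: since (\ref{2}) prescribes the value $S^0(t) f = f_t$ for every $f$ and every $t$, any two semigroups meeting the stated condition coincide as operators on all of $\mathcal{X}$. It remains to verify the invariance of each $\mathcal{X}^l$, and here estimate (\ref{3b}) does the work. For $f \in \mathcal{X}^l$ I would apply the monotonicity $N_l(t) \leq N_l$ — whose derivation in (\ref{3c}) rests only on the cardinality bound $|(\gamma_2^t)_{-t}| \leq |\gamma|$ and so holds verbatim for any nonnegative density — to $|f|$, using once more $|f_t| \leq (|f|)_t$ to reduce to the positive case. This gives
\[
\|S^0(t) f\|_{\mathcal{X}^l} = \int_{\Gamma} \bigl(1 + |\gamma|^l\bigr) |f_t(\gamma)| \, \lambda(d\gamma) \leq \int_{\Gamma} \bigl(1 + |\gamma|^l\bigr) |f(\gamma)| \, \lambda(d\gamma) = \|f\|_{\mathcal{X}^l},
\]
so $S^0(t)$ maps $\mathcal{X}^l$ continuously into itself. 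I expect the only genuinely delicate point to be confirming that the norm-preservation and the moment bound (\ref{3b}), both proved in the text for probability densities, transfer to signed $f$ via the positive/negative decomposition together with the estimate $|f_t| \leq (|f|)_t$; everything else is a matter of assembling facts already at hand.
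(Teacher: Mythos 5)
Your proposal is correct and follows essentially the same route as the paper: the semigroup law from the flow property (\ref{3}), strong continuity from Proposition \ref{1apn}, and invariance of $\mathcal{X}^l$ from the moment bound (\ref{3b}). The paper's proof is just a terse citation of these three facts; your additional details (the reduction of the signed case to the positive one via $|f_t|\leq(|f|)_t$, continuity at arbitrary $t_0$ by the contraction argument, and the observation that uniqueness is automatic since (\ref{2}) prescribes every value $S^0(t)f$) are all sound fillings-in of what the paper leaves implicit.
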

\begin{proof}
The semigroup property of $S^0$ follows by (\ref{3}). Its strong
continuity follows by Proposition \ref{1apn}, whereas the property
$S^0(t) : \mathcal{X}^l\to \mathcal{X}^l$ follows by (\ref{3b}).
\end{proof}
For each $f\in \mathcal{W}$ and $\lambda$-almost all $\gamma\in
\Gamma$, we know that the map $x \mapsto f(\gamma\cup x)$ is
continuous on $\bar{\mathds{R}}_{+}$ and absolutely continuous on
$\mathds{R}_{+}$, see Remark \ref{1rk}. Set
\begin{equation}
  \label{2a}
\mathcal{V}= \left\{ f \in \mathcal{W}: \int_{\Gamma} |f(\gamma\cup
0)|\lambda (d\gamma)<\infty \right\}.
\end{equation}
Note that, for each $x\in \mathds{R}_{+}$ and $f \in \mathcal{V}$,
\begin{equation}
  \label{2A}
\int_{\Gamma} |f(\gamma\cup x)|\lambda (d\gamma)\leq C_f <\infty,
\end{equation}
with an appropriate $C_f>0$, independent of $x$. Indeed, by
(\ref{12}) we have
\begin{gather}
\label{0b} \int_{\Gamma} |f(\gamma\cup x)- f(\gamma\cup 0)|\lambda
(d\gamma) \leq \int_{\Gamma} \left( \int_{\mathds{R}_{+}}
\left|\frac{\partial }{\partial x} f (\gamma\cup x)\right| dx
\right) \lambda ( d \gamma) \leq \| Df\|.
\end{gather}
Then the proof of (\ref{2A}) follows by the definition of
$\mathcal{V}$ and the triangle inequality. For $f\in \mathcal{V}$,
let $f^{(n)}$ be as in (\ref{0}) and $k_{f^{(n)}}$, see Remark
\ref{1rk},  be as in (\ref{9}) for this $f^{(n)}$. In view of
(\ref{2A}), one can define
\begin{equation}
\label{9A}
 k_f (x) = \int_{\Gamma} f(\gamma \cup x) \lambda ( d \gamma).
\end{equation}
Then the map $\bar{\mathds{R}}_{+}\ni x\mapsto k_f\in
\bar{\mathds{R}}_{+}$ is continuous and locally integrable, cf.
\cite[Sect. 1.1.2, pages 2,3]{Maz}. For $0\leq a< b<\infty$,
\[
 \int_a^b k_f(x) dx
\]
is the expected number of points with traits in the interval $[a,b]$
in the corresponding state. A priory $k_f$ need not be integrable on
the whole $\bar{\mathds{R}}_{+}$.

Define
\begin{equation}
  \label{2b}
  (L^0 f) (\gamma) = (D f)(\gamma) + f(\gamma\cup 0), \qquad f \in
  \mathcal{V}.
\end{equation}
Clearly, $L^0:\mathcal{V}\to \mathcal{X}$, in view of which we
introduce the following norm
\begin{equation}
 \label{2B}
 \|f\|_{\mathcal{V}} = \|f\| + \|Df\| + \int_{\Gamma} |f(\gamma\cup 0)|
 \lambda ( d \gamma).
\end{equation}
In the statement below, we will use the set $\mathcal{V}'$
consisting of all those $f\in \mathcal{V}$ which have the following
two properties: (a) for each $x\in \bar{\mathds{R}}_{+}$, the map
$\gamma \mapsto f(\gamma\cup x)$ is in $\mathcal{W}$; (b) for each
$x\in \bar{\mathds{R}}_{+}$,
\[
 \int_{\Gamma} |f(\gamma \cup \{ x, 0\})| \lambda(d \gamma) < \infty.
\]
Let us prove that
\begin{equation}
 \label{2C}
 \mathcal{V}\subset \overline{\mathcal{V}'},
\end{equation}
where the closure is taken in $\|\cdot\|_{\mathcal{V}}$. For $f\in
\mathcal{V}$ and $m\in \mathds{N}$, let $f_m$ be such that
$f_m^{(n)} = f^{(n)}$, $n\leq m$, and $f_m^{(n)} \equiv 0$ for
$n>m$, cf. (\ref{0}). Since each $f^{(n)}$ is in $W^{1,1}_{\rm
s}(\mathds{R}_{+}^n)$, we have that $\{f_m\}_{m}\subset
\mathcal{V}'$, see Remark \ref{1rk}. At the same time
$\|f-f_m\|_{\mathcal{V}} \to 0$ as $m\to +\infty$. Indeed,
\begin{gather*}
 \|f-f_m\|_{\mathcal{V}} = \|f-f_m\| + \|D(f-f_m)\| + \sum_{n=m+1}^\infty \frac{1}{n!} \int_{\bar{\mathds{R}}_{+}^n} \left|f^{(n)} (0, x_1 , \dots , x_n)\right| d x_1 \cdots d x_n.
\end{gather*}
All the three terms of the right-hand side are the remainders of
convergent series, that eventually yields (\ref{2C}).
\begin{proposition}
  \label{1bpn}
It follows that $\mathcal{V}=\mathcal{W}$ and the semigroup $S^0$ as
in Corollary \ref{2pnco} is generated by $(L^0,\mathcal{W})$.
\end{proposition}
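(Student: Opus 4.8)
The plan is to prove the two assertions separately: first the identity $\mathcal{V}=\mathcal{W}$, and then the generation statement, which (once $\mathcal{V}=\mathcal{W}$ is known) amounts to identifying the generator $A$ of the semigroup $S^0$ from Corollary \ref{2pnco} with the operator $(L^0,\mathcal{V})$. Throughout I would use closedness of $A$ (automatic, as $A$ generates a $C_0$-semigroup) and the combinatorial identity (\ref{La}).

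For $\mathcal{V}=\mathcal{W}$, the inclusion $\mathcal{V}\subseteq\mathcal{W}$ is the definition (\ref{2a}), so the content is $\mathcal{W}\subseteq\mathcal{V}$, i.e. the finiteness of $\int_{\Gamma}|f(\gamma\cup0)|\lambda(d\gamma)$ for every $f\in\mathcal{W}$. I would obtain this from a trace estimate at the edge. Fix $n$; for $f\in\mathcal{W}$ we have $f^{(n+1)}\in W^{1,1}_{\rm s}(\mathds{R}_{+}^{n+1})$, so by Remark \ref{1rk} and Fubini, for Lebesgue-almost all $(x_1,\dots,x_n)$ the slice $g(y):=f^{(n+1)}(y,x_1,\dots,x_n)$ is absolutely continuous with $g,g'\in L^1(\mathds{R}_{+})$. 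Then $g$ has a finite limit at $+\infty$, which integrability forces to be $0$; hence $g(0)=-\int_0^{\infty}g'(y)\,dy$ and $|g(0)|\le\int_0^{\infty}|g'(y)|\,dy$. Integrating in $(x_1,\dots,x_n)$, summing against $1/n!$, and comparing with the second line of (\ref{12}) yields $\int_{\Gamma}|f(\gamma\cup0)|\lambda(d\gamma)\le\|Df\|<\infty$, so $f\in\mathcal{V}$. I expect no difficulty here; this is the cleanest step.

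For the generation statement I would prove the two inclusions $L^0|_{\mathcal{V}}\subseteq A$ and $A\subseteq L^0|_{\mathcal{V}}$. For the first I start on the dense set $\mathcal{V}'$. Writing $S^0(t)f=f_t$ as in (\ref{2}) and splitting the integral over $\Gamma_t$ according to $|\xi|=0,1,\ge2$, I get three contributions to $t^{-1}(f_t-f)$: the \emph{drift} term $t^{-1}(f(\cdot_t)-f)\to Df$ in $\mathcal{X}$ by (\ref{12a}) and the strong continuity of $S_0$ (Corollary \ref{1pnco}); the \emph{one-birth} term $t^{-1}\int_0^t f(\gamma_t\cup y)\,dy\to f(\cdot\cup0)$ in $\mathcal{X}$, which I would justify by pointwise convergence (boundary continuity from Remark \ref{1rk}) together with the uniform bound (\ref{2A}) and the continuity of $k_{|f|}$ at $0$ noted after (\ref{9A}); and a \emph{multi-birth} remainder $R_t$ with $|\xi|\ge2$. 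On $\mathcal{V}'$ the remainder is harmless: property (b) of $\mathcal{V}'$ makes the two-point boundary integrals finite, so a trace estimate as in the first part gives $t^{-1}\|R_t\|=O(t)\to0$. Hence $\mathcal{V}'\subseteq D(A)$ and $Af=L^0f$ there. I then extend to all of $\mathcal{V}=\mathcal{W}$ using closedness of $A$, the density (\ref{2C}) of $\mathcal{V}'$ in $(\mathcal{V},\|\cdot\|_{\mathcal{V}})$, and the bound $\|L^0g\|\le\|Dg\|+\int_{\Gamma}|g(\gamma\cup0)|\lambda(d\gamma)\le\|g\|_{\mathcal{V}}$ from (\ref{2B}): if $f_m\to f$ in $\|\cdot\|_{\mathcal{V}}$ with $f_m\in\mathcal{V}'$, then $f_m\to f$ and $L^0f_m\to L^0f$ in $\mathcal{X}$, so $f\in D(A)$ and $Af=L^0f$. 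This gives $L^0|_{\mathcal{V}}\subseteq A$.

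For the reverse inclusion $D(A)\subseteq\mathcal{V}$ I would argue via the resolvent. For large $\varkappa$ one has $\varkappa\in\rho(A)$ and $R_\varkappa(A)g=\int_0^{\infty}e^{-\varkappa t}S^0(t)g\,dt$. It suffices to show that $\varkappa-L^0$ maps $\mathcal{V}$ onto $\mathcal{X}$: granting this, for $g\in D(A)$ put $u=(\varkappa-A)g$ and pick $f\in\mathcal{V}$ with $(\varkappa-L^0)f=u$; since $L^0|_{\mathcal{V}}\subseteq A$ we get $(\varkappa-A)f=u=(\varkappa-A)g$, and injectivity of $\varkappa-A$ forces $g=f\in\mathcal{V}$, whence $D(A)=\mathcal{V}=\mathcal{W}$ and $A=L^0$. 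I expect the surjectivity of $\varkappa-L^0$ to be the main obstacle: the natural candidate solution of $(\varkappa-L^0)f=u$ is $f=R_\varkappa(A)u$, and one must verify that this $f$ actually lies in $\mathcal{W}$, i.e. that each $f^{(n)}$ belongs to $W^{1,1}_{\rm s}(\mathds{R}_{+}^n)$ and that the series defining $\|Df\|$ in (\ref{12}) converges. I would establish this from the explicit form of $f_t$ in (\ref{2}), differentiating under the integral in the drift variable and using the moment bounds (\ref{3b}) to control the dependence on the particle number, exploiting that the birth term raises the particle number by exactly one so that the stationary equation is solvable order by order, with the weight $e^{-\varkappa t}$ supplying summability for large $\varkappa$. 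Verifying this Sobolev regularity of the resolvent, and the uniform control of the multi-birth remainder on $\mathcal{V}'$, are the two places where real work is needed; the rest is bookkeeping.
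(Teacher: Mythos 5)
Your first two steps are sound, and the first is genuinely nicer than what the paper does. The paper obtains the trace bound $\int_{\Gamma}|f(\gamma\cup 0)|\,\lambda(d\gamma)\le\|Df\|$ (its (\ref{0a})) only indirectly: it first proves the derivative formula (\ref{2D}), deduces $\varphi(L^0f)=0$ for positive $f\in\mathcal{V}$ from norm preservation of $S^0$, and then passes to truncations; your slice-wise fundamental-theorem-of-calculus argument gives the same bound directly and settles $\mathcal{W}\subseteq\mathcal{V}$ in a few lines. Your proof of $L^0|_{\mathcal{V}}\subseteq A$ (drift/one-birth/multi-birth splitting on $\mathcal{V}'$, then extension by closedness of the generator and the bound $\|L^0g\|\le\|g\|_{\mathcal{V}}$) is essentially the paper's proof of (\ref{2D}), with a cleaner extension step.

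The gap is in the reverse inclusion. You reduce $D(A)\subseteq\mathcal{W}$ to the surjectivity of $\varkappa-L^0:\mathcal{W}\to\mathcal{X}$, i.e.\ to showing $R_\varkappa(A)u\in\mathcal{W}$ for \emph{every} $u\in\mathcal{X}$, and defer this as ``real work''. It is not merely hard; it fails, because the semigroup (\ref{2}) (hence its resolvent) regularizes only along the drift direction $(1,\dots,1)$, while membership in $\mathcal{W}$ requires every weak partial derivative separately in $L^1$, see (\ref{12}). Concretely, let $C\subset[1,2]$ be a compact set of positive measure and empty interior (a fat Cantor set), let $a(v)=\mathds{1}_C(|v|)$, let $b$ be smooth, nonnegative, compactly supported in $(0,+\infty)$, and define $u\in\mathcal{X}$ by $u^{(2)}(y_1,y_2)=a(y_1-y_2)b(y_1+y_2)$, $u^{(n)}\equiv 0$ for $n\neq 2$. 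Since $u^{(k)}=0$ for $k\ge 3$, formula (\ref{2}) gives $(S^0(t)u)^{(2)}(x_1,x_2)=u^{(2)}(x_1+t,x_2+t)$, whence
\begin{equation*}
\bigl(R_\varkappa(A)u\bigr)^{(2)}(x_1,x_2)=a(x_1-x_2)\int_0^{+\infty}e^{-\varkappa t}\,b(x_1+x_2+2t)\,dt .
\end{equation*}
In the variables $v=x_1-x_2$, $w=x_1+x_2$ this is $a(v)$ times a smooth function of $w$ that is strictly positive on the relevant strip; its distributional $v$-derivative involves $D\mathds{1}_C$, which is not an $L^1$ function (not even a measure, since $\mathds{1}_C\notin BV$). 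So $\bigl(R_\varkappa(A)u\bigr)^{(2)}\notin W^{1,1}$, i.e.\ $R_\varkappa(A)u\in D(A)\setminus\mathcal{W}$. By your own forward inclusion and injectivity of $\varkappa-A$, any $f\in\mathcal{W}$ with $(\varkappa-L^0)f=u$ would have to equal $R_\varkappa(A)u$; hence $u$ has no preimage in $\mathcal{W}$ and the surjectivity you need cannot hold. (A minor additional point: the heuristic ``solvable order by order'' is oriented the wrong way — the equation for $f^{(n)}$ contains the trace $f^{(n+1)}(0,\cdot)$, so the hierarchy couples upward in particle number, not downward.)

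For comparison, the paper's route to the reverse inclusion avoids the resolvent entirely: it shows $\mathcal{V}$ is invariant under $S^0$ (hence, being dense, a core for the generator) and then appeals to closedness of $(L^0,\mathcal{W})$, argued via equivalence of $\|\cdot\|_{\mathcal{V}}$ and $\|\cdot\|_{\mathcal{W}}$. Note, however, that the example above puts an element of $D(A)$ outside $\mathcal{W}$, so no argument can yield $D(A)=\mathcal{W}$ with $\mathcal{W}$ defined through full Sobolev regularity: the graph norm of $L^0$ controls only the directional derivative $Df$, not $\sum_j|\partial_j f^{(n)}|$. What is genuinely recoverable — by the core argument just described — is that the generator of $S^0$ is the \emph{closure} of $(L^0,\mathcal{W})$; if you want to salvage your resolvent strategy, it would have to be run with $\mathcal{W}$ replaced by the larger space of $f$ whose derivative along the flow (rather than all partials) lies in $L^1$.
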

\begin{proof}
First we prove that, for all $f\in \mathcal{V}$, it follows that
\begin{equation}
 \label{2D}
\left\|\frac{1}{t} (f_t - f) - L^0f \right\| \to 0, \qquad t\to
0^{+}.
\end{equation}
Clearly, it is enough to show this for $f\in \mathcal{V}^{+}:=
\mathcal{V}\cap \mathcal{X}^{+}$ only. Similarly as in (\ref{3d})
and then (\ref{2e}), for such $f$ we obtain
\begin{gather}
  \label{2c}
  f_t (\gamma) - f(\gamma) = f (\gamma_t) - f(\gamma) +
  \int_{\Gamma} f(\gamma_t \cup \xi) \mathds{1}_{\Gamma_t} (\xi)
  \chi(\xi) \lambda (d\xi) \\[.2cm] \nonumber =: t (D f)
  (\gamma_\tau)+ t F_t(\gamma),
\end{gather}
for some $\tau\in [0,t)$, see (\ref{12a}). Then to prove (\ref{2D})
it suffices to show that
\begin{equation}
  \label{2z}
 I(t):= \int_{\Gamma} | F_t(\gamma) - f(\gamma\cup 0)|\lambda ( d \gamma)
  \to 0, \qquad t\to 0^{+},
\end{equation}
holding for positive $f\in \mathcal{V}'$, see (\ref{2C}). By
(\ref{2c}) we then have
\begin{gather}
  \label{2y}
  I(t) \leq \int_{\Gamma} \left|\frac{1}{t} \int_0^t f(\gamma\cup x) dx - f(\gamma\cup 0) \right|\lambda ( d \gamma) + J(t), \\[.2cm] \nonumber J(t) := \int_{\Gamma}\left( \sum_{n=2}^{\infty} \frac{1}{t n!}\int_0^t \cdots \int_0^t f(\gamma_t \cup\{
  x_1, \dots , x_m\}) d x_1 \cdot d x_n \right) \lambda ( d \gamma).
\end{gather}
To estimate $J(t)$ we proceed as follows, cf. (\ref{3c}),
\begin{eqnarray*}
  J(t)& \leq & \int_{\Gamma}\left[ \sum_{n=0}^{\infty} \frac{1}{t n!}\int_0^t  \int_0^t \bigg{(} f(\gamma_t \cup\{x,y\} \cup
  x_1, \dots , x_m\}) d x_1 \cdot d x_n \bigg{)} d x d y \right] \lambda ( d \gamma) \\[.2cm] \nonumber& = & \frac{1}{t}\int_0^t  \int_0^t \bigg{(} \int_{\Gamma_t^c} \int_{\Gamma_t} f(\gamma \cup \xi \cup\{x,y\}) \lambda (d \gamma) \lambda ( d \xi) \bigg{)} d x dy
\\[.2cm] \nonumber& = & \frac{1}{t}\int_0^t  \int_0^t \bigg{(} \int_{\Gamma} \int_{\Gamma} f(\gamma \cup \xi \cup\{x,y\}) \mathds{1}_{\Gamma_t^c}(\gamma) \mathds{1}{\Gamma_c}(\xi) \lambda (d \gamma) \lambda ( d \xi) \bigg{)} d x dy \\[.2cm] \nonumber& = & \frac{1}{t} \int_0^t  \int_0^t \bigg{[} \int_{\Gamma} f(\gamma\cup\{x,y\}) \left( \sum_{\xi \subset \gamma} \mathds{1}_{\Gamma_t^c} (\gamma \setminus \xi) \mathds{1}_{\Gamma_t}(\xi) \right) \lambda ( d \gamma) \bigg{]} d x d y \\[.2cm] \nonumber& = & \frac{1}{t} \int_0^t  \int_0^t \bigg{(} \int_{\Gamma} f(\gamma\cup\{x,y\}) \lambda ( d \gamma) \bigg{)}  d x d y \leq t C'_f,
\end{eqnarray*}
where $C'_f$ is the constant in the estimate
\[
 \int_{\Gamma} f(\gamma\cup\{x,y\}) \lambda ( d \gamma) \leq C'_f,
\]
that can be obtained for a positive $f\in \mathcal{V}'$ similarly as
(\ref{2A}). Since $\gamma \mapsto f(\gamma\cup 0)$ is in
$\mathcal{W}$, the first term in the first line of (\ref{2y}) also
disappears in the limit $t\to 0^{+}$, which finally yields
(\ref{2z}).

Let us prove now that $\mathcal{V}=\mathcal{W}$. By Corollary
\ref{2pnco} (semigroup property and strong continuity) and by
(\ref{2D}) it follows that $f_t$ is differentiable in $t$ at all
$t\geq 0$ whenever $f\in \mathcal{V}$. Then $f_t \in \mathcal{W}$,
see (\ref{12a}). Let us prove that also $f_t\in \mathcal{V}$ in this
case. Indeed, by (\ref{2}) and (\ref{2A}) for $f\in
\mathcal{V}^{+}$, we have
\begin{eqnarray*}
 \int_{\Gamma} f_t (\gamma \cup 0 ) \lambda ( d \gamma) & = & \int_{\Gamma} \int_{\Gamma} f (\gamma \cup t \cup \xi) \mathds{1}_{\Gamma_t^c} (\gamma) \mathds{1}_{\Gamma_t}(\xi) \lambda ( d \gamma)\lambda ( d \xi) \\[.2cm] & = & \int_{\Gamma} f(\gamma \cup t) \lambda ( d\gamma) \leq C_f.
\end{eqnarray*}
Then for $f\in \mathcal{V}^{+}$, we can write
\begin{equation*}
 f_t = f + \int_0^t L^0 f_\tau d \tau_1,
\end{equation*}
from which we then obtain
\begin{equation}
 \label{2Y}
 \|f_t\| = \int_{\Gamma} f_t(\gamma) \lambda ( d \gamma) = \|f\| + \int_0^t \left(\int_{\Gamma} (L^0 f_\tau) (\gamma) \lambda ( d \gamma) \right)  d \tau.
\end{equation}
By Corollary \ref{2pnco} we know that $\|f_t\| = \|f\|$, which by
(\ref{2Y}) implies
\begin{equation}
 \label{2X}
 \forall f \in \mathcal{V}^{+} \qquad \varphi (L^0 f) := \int_{\Gamma} (L^0 f) (\gamma) \lambda ( d \gamma) = 0.
\end{equation}
Now we take $f\in \mathcal{W}^{+}$ and consider $\{f_m\}_{m\in
\mathds{N}} \subset \mathcal{V}$, where -- as above -- $f^{(n)}_m =
f^{(n)}$, $n\leq m$ and $f^{(n)}_m\equiv 0$ for $n>m$. Then
\[
\int_{\Gamma} f_m(\gamma\cup 0 ) \lambda ( d \gamma) = \int_{\Gamma}
( D f_m) (\gamma) \lambda ( d\gamma) \leq \|Df\|,
\]
by which and Lebesgue's dominated convergence theorem we conclude
that
\begin{equation}
  \label{0a}
  \int_{\Gamma} f(\gamma\cup 0 ) \lambda ( d \gamma)\leq \|D f\| < \infty,
\end{equation}
which by (\ref{2a}) yields $\mathcal{V}= \mathcal{W}$. By combining
(\ref{0a}) and (\ref{2X}) we obtain in turn
\begin{equation}
  \label{0c}
   \int_{\Gamma} f(\gamma\cup 0 ) \lambda ( d \gamma)= \|D f\|,
   \qquad f\in \mathcal{W}^{+} :=\mathcal{W}\cap \mathcal{X}^{+}.
\end{equation}
Thus, it remains to show that $(L^0,\mathcal{W})$ is closed. By
(\ref{0b}) and (\ref{0a}) it follows that the norms $\|\cdot
\|_{\mathcal{V}}$ and $\|\cdot \|_{\mathcal{W}}$ are equivalent, see
(\ref{12A}) and (\ref{2B}). Then the graph of $(L^0,\mathcal{W})$ is
closed in the graph norm, which yields the closedness and hence the
whole proof.
\end{proof}

\subsection{The model}

Our principal model is a modification of the soluble model just
described. Its main new aspect is that each particle by reaching the
edge divides at random into two progenies with randomly distributed
traits $x,y\in \bar{\mathds{R}}_{+}$. In addition, we assume here
that the particles can disappear (die) at random also outside of the
origin. The Fokker-Planck-Kolmogorov equation
\begin{equation}
  \label{Ma}
  \frac{d}{dt} f_t = L f_t, \qquad f_{t}|_{t=0} = f_0
\end{equation}
corresponding to this our model is defined by the Kolmogorov
operator $L$ that has the following form, cf. (\ref{2b}),
\begin{gather}
  \label{M2}
(Lf)(\gamma) = (Df)(\gamma) + m \int_{\mathds{R}_{+}} f
(\gamma\cup x) dx \\[.2cm] \nonumber - m|\gamma| f(\gamma) + \sum_{\{x,y\}\subset
\gamma} b(x,y) f (\gamma \setminus \{x,y\} \cup 0).
\end{gather}
Here $m\geq 0$ is the mortality rate and $b$ is a symmetric
probability density which hereby has the property
\begin{equation}
  \label{M3}
\frac{1}{2}  \int_{\bar{\mathds{R}}^2} b(x,y) d x d y = 1.
\end{equation}
For $\sigma>0$, set
\begin{equation}
 \label{sigma}
 \phi_\sigma (x) = (1+ x)^{-\sigma}, \qquad x\in \bar{\mathds{R}}_{+}.
\end{equation}
Our assumption concerning the cell cycle probability density is that
   \begin{equation}
     \label{M22}
   \forall x,y \qquad  b (x,y) \leq b^*\left[ \phi_{\sigma+1}(x)\phi_{\sigma}(y) + \phi_{\sigma}(x)\phi_{\sigma+1}(y) \right],
   \end{equation}
holding with some $\sigma \geq 3$ and $b^*>0$. Then (\ref{Ma}) with
$L$ given in (\ref{M2}) describes a drift of the particles towards
the origin (with unit speed) subject to a random death that occurs
at $x\in \bar{\mathds{R}}_{+}$ with constant rate $m$. At the
origin, the particle produces two progenies whose initial traits
(times to their division) are random. According to (\ref{M2}) the
dynamics of the considered model is characterized by the following
competing processes: (a) disappearance of the existing particles at
the edge $x=0$ and due to the mentioned random death; (b) appearance
of new particles in the course of division. It is quite clear that,
for $m=0$, the branching is supercritical and thus the population
will grow ad infinitum.  Among our aims in this work is to find a
trade-off condition for these two processes that secures the
boundedness in time of the population mean size.

As mentioned above, our model is intended to capture the basic
aspects of the dynamics of a population of tumor cells consisting in
the following: (a) malfunctioning of regulatory mechanisms and hence
uncontrolled proliferation with random cycle length; (b) increased
mortality caused by therapeutics; (c) death occurring at random with
no inter-cell dependence. In the model, aspect (a) corresponds to
the independent division with random cycle length, for a given
particle equal to its trait $x$ at the moment of its appearance.
Aspects (b) and (c) are taken into account in the second and third
terms of $L$, see (\ref{M2}). The choice of the model parameters is
based on the following reasons: (a) we believe that the therapeutic
effect on a cell is nearly independent of its age (phase of
mitosis); (b) $b(x,y)$ is often modeled as the product of two
$\Gamma$-densities $x^k e^{-\alpha x}$, cf. \cite{Gabriel,Ye}, which
clearly satisfies (\ref{M22}). See  also \cite{Marz} for more on
cell cycle modeling and Section 4 below for further comments.

Let us now define $L$ as an operator in $\mathcal{X}$. Set
\begin{gather}
  \label{M6}
L = A + B = A + B_1 + B_2, \\[.2cm] \nonumber
(Af)(\gamma) = ( Df)(\gamma) - m|\gamma| f(\gamma) \\[.2cm]
\nonumber (B_1f)(\gamma) = \sum_{\{x,y\}\subset \gamma} b(x,y)
f(\gamma
\setminus \{x,y\}\cup 0), \\[.2cm]
\nonumber (B_2f)(\gamma) = m\int_{\mathds{R}_{+}}  f (\gamma \cup x)
d x.
\end{gather}
Note that both $B_i$ are positive. Set
\begin{equation}
  \label{M9}
  h_m(\gamma) = 1 + m|\gamma|, \qquad m>0.
\end{equation}
By (\ref{La}) and (\ref{M6}), (\ref{M9}) we then have
\begin{equation}
  \label{M12}
  \|B_2 f\| \leq \|f\|_{h_m}.
\end{equation}
At the same time, for $f\in \mathcal{W}^{+}$, we have, cf.
(\ref{La}),
\begin{gather}
  \label{M11}
\|B_1 f \| = \int_{\Gamma}\left( \sum_{\{x,y\} \subset \gamma}
b(x,y)
f(\gamma\setminus \{x,y\}\cup 0)\right) \lambda (d \gamma) \\[.2cm] \nonumber = \frac{1}{2} \int_{\Gamma}\left( \sum_{x\in \gamma} \sum_{y\in \gamma\setminus x} b(x,y) f(\gamma\setminus \{x,y\}\cup 0)\right) \lambda (d \gamma) \\[.2cm] \nonumber = \frac{1}{2}\int_{\Gamma} \left(
\int_{\bar{\mathds{R}}_{+}} \sum_{y\in \gamma} b(x,y)
f(\gamma\setminus y\cup 0) d x \right) \lambda ( d
\gamma) \\[.2cm] \nonumber= \frac{1}{2}\int_{\Gamma}    \left(
\int_{\bar{\mathds{R}}_{+}^2} b(x,y) d x dy\right) f(\gamma\cup 0)
\lambda ( d \gamma) = \|D f\|,
\end{gather}
where we have taken into account (\ref{M3}) and (\ref{0c}). Keeping
this and (\ref{M12}) in mind we set
\begin{equation}
  \label{M13}
\mathcal{D}(A)= \mathcal{W}\cap   \mathcal{X}_{h_m}, \qquad
\mathcal{D}^{+}(A)= \mathcal{D}(A) \cap \mathcal{X}^{+}.
\end{equation}
Note that, for $f\in \mathcal{X}_{h_m}$, $k_f$ defined in (\ref{9A})
is integrable on $\bar{\mathds{R}}_{+}$.  Then by (\ref{M12}) and
(\ref{M11}) we conclude that
\begin{equation}
  \label{M13a}
  B: \mathcal{D}(A) \to \mathcal{X}.
\end{equation}

\subsection{The result}

For positive $\varsigma$ and $\alpha$, we set
\begin{gather}
  \label{M27}
\psi_\alpha (x) = e^{-\alpha x} , \qquad
x\in\bar{\mathds{R}}_{+},\\[.2cm] \nonumber
  h_{\varsigma,\alpha} (\gamma) = 1 + \varsigma |\gamma| +
\sum_{x\in \gamma} \psi_\alpha
  (x), \qquad \gamma \in \Gamma.
\end{gather}
Next, assuming (\ref{M22}) holding with $\sigma \geq 3$, we
introduce
\begin{equation}
  \label{M27z}
  m_1 =\max\left\{ 0;\ \frac{\sigma - 1}{2 \sigma -5} \left(
\frac{b^*}{2}-\sigma\right) \right\}.
\end{equation}
Our result is formulated in the next statement where by a classical
solution of the Cauchy problem in (\ref{Ma}) with $f_0 \in
\mathcal{D}(A)$ -- as is standard for such problems \cite[Chapter
4]{Pazy} -- we mean a function $t\mapsto f_t\in
\overline{\mathcal{D}(A)}\subset \mathcal{X}$ which is: (a)
continuously differentiable at all $t\geq 0$; (b) such that both
equalities in (\ref{Ma}) are satisfied. Here
$\overline{\mathcal{D}(A)}$ denotes the domain of the closure of
$L=A+B$, see Lemma \ref{A2lm} below.
\begin{theorem}
  \label{1tm}
Assume that (\ref{M22}) holds with some $\sigma\geq 3$ and $b^*>0$.
Then, for each $m> m_1$ and $f_0\in \mathcal{D}^{+}_1 (A) := \{ f\in
\mathcal{D}^{+}(A): \|f\|=1\}$, the Fokker-Planck-Kolmogorov
equation (\ref{Ma}) has a unique classical positive solution $f_t$
such that $\|f_t\|=1$. Furthermore, there exists $m_2\geq m_1$
(explicitly computable) such that, for $m\geq m_2$, there exists
$\varsigma >0$ for which $\|f_t\|_{h_{\varsigma,\alpha}} \leq
\|f_0\|_{h_{\varsigma,\alpha}}$ for all $t>0$.
\end{theorem}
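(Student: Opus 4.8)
The plan is to realize $L=A+B$ through the Kato–Voigt substochastic perturbation scheme and then to upgrade the resulting substochastic semigroup to a stochastic (honest) one by means of the weight $h_{\varsigma,\alpha}$ introduced in (\ref{M27}). First I would note that $A=D-m|\gamma|$ generates a substochastic $C_0$-semigroup on $\mathcal{X}$, explicitly $(e^{tA}f)(\gamma)=e^{-mt|\gamma|}f(\gamma_t)$: this follows from Corollary \ref{1pnco} together with the fact that multiplication by $-m|\gamma|$ is dissipative and commutes with the cardinality-preserving shift, its domain being $\mathcal{D}(A)=\mathcal{W}\cap\mathcal{X}_{h_m}$ as in (\ref{M13}). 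By (\ref{M13a}) the perturbation $B=B_1+B_2$ is positive and maps $\mathcal{D}(A)$ into $\mathcal{X}$. The decisive algebraic fact is exact conservativity on the positive cone: for $f\in\mathcal{D}^{+}(A)$, combining $\int_\Gamma(Df)\,d\lambda=-\|Df\|$ (from (\ref{2X}) and (\ref{0c})), the identity $\int_\Gamma(B_1f)\,d\lambda=\|Df\|$ from (\ref{M11}), and $\int_\Gamma(B_2f)\,d\lambda=m\int_\Gamma|\gamma|f\,d\lambda$ from the Mecke-type identity behind (\ref{M12}), one obtains
\begin{equation*}
\int_\Gamma (Lf)(\gamma)\,\lambda(d\gamma)=0,\qquad f\in\mathcal{D}^{+}(A).
\end{equation*}
Here the mortality loss $-m\int|\gamma|f$ is cancelled by the mild appearance term $B_2$, and the edge loss $-\|Df\|$ by the fission gain $B_1$. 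The Kato–Voigt/Arlotti–Banasiak theorem \cite{hon,Banasiak} then furnishes the smallest substochastic $C_0$-semigroup $S=\{S(t)\}_{t\ge0}$ generated by an extension $G\supseteq L$, acting on the domain $\overline{\mathcal{D}(A)}$ of the closure of $L$ (Lemma \ref{A2lm}).

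The next, and central, step is to show that $S$ is in fact stochastic for $m>m_1$. Formal conservativity alone does not suffice, since mass may be lost on the larger domain of $G$; the standard remedy \cite{TV,Banasiak} is to exhibit a Lyapunov functional controlling the mass. I would take the weight $h_{\varsigma,\alpha}$ and verify, for a suitable choice of $\varsigma>0$ and $\alpha>0$, a dissipativity estimate of the form $\int_\Gamma (\Lambda h_{\varsigma,\alpha})(\gamma)\,f(\gamma)\,\lambda(d\gamma)\le\omega\,\|f\|_{h_{\varsigma,\alpha}}$ with a finite $\omega=\omega(m)$, where $\Lambda$ denotes the backward generator dual to $L$. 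Together with the conservativity identity above this is exactly the Thieme–Voigt condition ensuring that the minimal semigroup is honest, i.e. $\|S(t)f_0\|=1$ for every $f_0\in\mathcal{D}_1^{+}(A)$. The threshold $m_1$ in (\ref{M27z}) is precisely the value of $m$ above which such $\varsigma,\alpha,\omega$ can be found. Uniqueness of the classical solution and the regularity in (a)–(b) of the statement then follow from the generation theorem for $G$ and from $f_0\in\mathcal{D}(A)$ lying in the generator's domain, cf. \cite[Chapter 4]{Pazy}.

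For the boundedness of the mean size I would work directly with the weighted norm along the (now genuinely stochastic, positive) solution: since $f_t\ge0$,
\begin{equation*}
\frac{d}{dt}\,\|f_t\|_{h_{\varsigma,\alpha}}=\int_\Gamma h_{\varsigma,\alpha}(\gamma)(Lf_t)(\gamma)\,\lambda(d\gamma)=\int_\Gamma (\Lambda h_{\varsigma,\alpha})(\gamma)\,f_t(\gamma)\,\lambda(d\gamma).
\end{equation*}
Because $h_{\varsigma,\alpha}=1+\varsigma|\gamma|+\sum_{x\in\gamma}\psi_\alpha(x)$ is a sum of additive one-particle observables, $\Lambda h_{\varsigma,\alpha}$ can be computed termwise: the constant contributes nothing by conservativity; the drift acting on $\sum_x\psi_\alpha(x)$ produces the growth term $\alpha\sum_x\psi_\alpha(x)$ (as $x\to0$, $\psi_\alpha\to1$); the mortality produces $-m\sum_x(\varsigma+\psi_\alpha(x))$; and the fission at the edge, weighted by the boundary flux $\int_\Gamma f(\gamma\cup0)\lambda(d\gamma)=\|Df\|$ from (\ref{0c}), replaces a $0$-particle by two offspring and hence contributes $\varsigma+\tfrac12\int_{\bar{\mathds{R}}_{+}^{2}}b(x,y)(\psi_\alpha(x)+\psi_\alpha(y))\,dx\,dy-\psi_\alpha(0)$ per division. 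Bounding the offspring integrals via (\ref{M22}) — where the polynomial tails $\phi_\sigma,\phi_{\sigma+1}$ with $\sigma\ge3$ render $\int b(x,y)\psi_\alpha(y)\,dy$ finite and explicitly estimable — and then requiring the assembled $\Lambda h_{\varsigma,\alpha}$ to be pointwise nonpositive yields an inequality among $m,\alpha,\varsigma,b^\ast,\sigma$. Optimizing over small $\varsigma$ and a suitable $\alpha$ gives the explicit threshold $m_2\ge m_1$ beyond which $\Lambda h_{\varsigma,\alpha}\le0$, whence $\frac{d}{dt}\|f_t\|_{h_{\varsigma,\alpha}}\le0$; in particular the mean size, controlled by $\varsigma|\gamma|\le h_{\varsigma,\alpha}$, stays bounded in time.

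I expect the main obstacle to reside in exactly these two threshold computations. The delicate point for honesty is to convert the formal identity $\int_\Gamma (Lf)\,d\lambda=0$, valid only on the core $\mathcal{D}^{+}(A)$, into genuine stochasticity on the full domain $\overline{\mathcal{D}(A)}$ of $G$; this is where the quantitative Lyapunov estimate, and hence the precise value of $m_1$, is indispensable, and where one must check that the offspring distribution does not feed mass into the tail faster than drift and death remove it. For the boundedness part the crux is the exact bookkeeping of the edge/branching contribution — correctly identifying the flux factor from (\ref{0c}) and controlling $\int b(x,y)\psi_\alpha(y)\,dy$ through (\ref{M22}) — since it is the competition between the positive drift term $\alpha\sum_x\psi_\alpha(x)$ and the negative mortality term that produces the sharp constant recorded in (\ref{M27z}).
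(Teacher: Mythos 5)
Your outer two steps reproduce the paper's architecture: the Kato--Voigt construction of the minimal substochastic semigroup from the conservativity identity $\varphi((A+B)f)=0$ on $\mathcal{D}^{+}(A)$ is exactly Lemma \ref{A1lm}, and your boundedness computation with the weight $h_{\varsigma,\alpha}$ --- drift contributing $+\alpha\psi_\alpha$, mortality $-m(\varsigma+\psi_\alpha)$, and the per-division edge term $\varsigma-1+\hat{\beta}(\alpha)=\Upsilon(\varsigma,\alpha)$ weighted by the flux $\int_\Gamma f(\gamma\cup 0)\lambda(d\gamma)=\|Df\|$ --- is exactly the paper's proof via Proposition \ref{A3pn}, equations (\ref{P11})--(\ref{P12}). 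The genuine gap is the middle step: honesty for \emph{all} $m>m_1$, which is also where, by Proposition \ref{A2pn}, the generator must be identified with the closure of $(L,\mathcal{D}(A))$ --- the fact on which the existence and uniqueness of the classical solution in $\overline{\mathcal{D}(A)}$ rests.

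The paper proves this (Lemma \ref{A2lm}) by Mokhtar-Kharroubi's criterion (Proposition \ref{Mustaphapn}): it exhibits a strictly positive $v\in\mathcal{D}(A)$, namely $v(\gamma)=|\gamma|!\prod_{x\in\gamma}\phi_\sigma(x)$, satisfying the \emph{pointwise} inequality $(Av)(\gamma)+(Bv)(\gamma)\le 0$ on $\Gamma$; it is this computation --- the factorial weight, the count of the $|\gamma|-1$ pairs in $B_1$ containing a given point, and the bound (\ref{M22}) --- that produces the constants $b^*/2$ and $(2\sigma-5)/(\sigma-1)$, hence the formula (\ref{M27z}) for $m_1$. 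You replace this construction by an integrated Lyapunov estimate $\varphi_{h_{\varsigma,\alpha}}((A+B)f)\le\omega\|f\|_{h_{\varsigma,\alpha}}$ and assert that $m_1$ is ``precisely the value of $m$'' above which it can be arranged. That assertion is false, and it conceals the missing idea: carrying out your estimate gives exactly (\ref{P11}), in which the boundary flux $\int_\Gamma f(\gamma\cup 0)\lambda(d\gamma)$ is not dominated by any multiple of $\|f\|_{h_{\varsigma,\alpha}}$, so one is forced to impose $\Upsilon(\varsigma,\alpha)\le 0$; the resulting restriction on $m$ then involves only $\alpha$ and $\hat{\beta}(\alpha)$ --- it is a condition of the $m_2$ type, in which $b^*$ and $\sigma$ enter only through the integrability of $\beta$. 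No choice of $\varsigma,\alpha$ in this computation can generate the expression $\frac{\sigma-1}{2\sigma-5}\left(\frac{b^*}{2}-\sigma\right)$. Consequently you face a dichotomy: if the Thieme--Voigt stochasticity criterion you appeal to tolerates a genuine growth bound $\omega>0$, then (after verifying the restriction and mapping hypotheses on $\mathcal{X}_{h_{\varsigma,\alpha}}$, which you do not do) your argument would give honesty for every $m\ge 0$, making your claimed derivation of $m_1$ spurious and resting on a theorem stronger than any quoted in the paper (Proposition \ref{A1ln} yields only substochasticity, Proposition \ref{A3pn} only invariance); if instead the criterion requires $\varphi_{h_{\varsigma,\alpha}}((A+B)f)\le 0$, your honesty range degenerates to the $m_2$-type one and misses the interval between $m_1$ and $m_2$, leaving the first assertion of the theorem unproven there. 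Either way, as written the proposal does not establish honesty on the stated range: the paper's supersolution $v$, or an equivalent substitute from which (\ref{M27z}) actually emerges, is the missing step.
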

The proof of this theorem  will be performed in Section \ref{3S}
below. Here we make some comments to its results. The last part of
Theorem \ref{1tm} yields a balance condition between the
disappearance of the particles and the appearance of their
progenies. Indeed, the expected number of particles at time $t$ is
$N_1(t)$, see (\ref{3b}). By (\ref{M27}) and Theorem \ref{1tm} we
then have
\begin{equation}
  \label{M13c}
  N_1 (t) \leq \varsigma^{-1} \|f_0 \|_{h_{\varsigma,\alpha}},
\end{equation}
and thus $N_1(t)$ remains bounded if the mortality rate $m$ is
bigger than a certain quantity, explicitly computable in terms of
the cell cycle parameters, see (\ref{P}) below. Another conclusion
of this sort is that the evolution described by Theorem \ref{1tm} is
honest, cf. \cite{hon}, since the norm of $f_t$ is preserved, i.e.,
$\|f_t\|=1$. This, in particular, means that the system of particles
remains almost surely finite for all $t>0$. Indeed, since $f_t$ is
the Radon-Nikodym derivative of the state at time $t$, the fact that
$\|f_t\|<1$ would mean that the population is finite with
probability strictly less than one, and hence the estimate in
(\ref{M13c}) holds provided the system is finite. Since (perhaps)
Reuter's seminal paper \cite{Reuter}, the evolution of this kind is
called dishonest. More on the \emph{honesty theory} can be found in
\cite{hon}.

The backward Kolmogorov equation
\begin{equation}
  \label{M4}
  \frac{d}{dt}F_t = L^* F_t, \qquad F_t|_{t=0} = F_0,
\end{equation}
is dual to (\ref{Ma}) in the sense that
\[
\int_{\Gamma} F(\gamma) (Lf)(\gamma) \lambda ( d \gamma) =
\int_{\Gamma} (L^*F)(\gamma) Lf(\gamma) \lambda ( d \gamma).
\]
Here $F_t:\Gamma \to \mathds{R}$ is an \emph{observable} and
\begin{eqnarray}
  \label{M5}
  ( L^* F)(\gamma)& = & - ( D F)(\gamma) + \sum_{x\in \gamma} m(x)
  \left[ F(\gamma\setminus x) - F(\gamma)\right]\\[.2cm] \nonumber &
  + & \frac{1}{2}\sum_{x\in \gamma} \delta (x) \int_{\bar{\mathds{R}}_{+}^2} b(y,z) \left[
  F(\gamma \setminus x\cup \{y,z\}) - F(\gamma)
  \right] dy d z,
\end{eqnarray}
that additionally clarifies the nature of the dynamics described by
$L$ and its dual $L^*$.

\section{The Proof}

\label{3S}

The proof will be divided into two parts. First we construct a $C_0$
semigroup $S=\{S(t)\}_{t\geq 0}$ such that the solution in question
is obtained in the form $f_t = S(t)f_0$, for all $t\geq 0$ and
initial $f_0$ belonging to the domain of the generator of $S$. A
special attention here will be paid to proving that $S$ is
stochastic. In the second part, we prove the stated boundedness that
implies (\ref{M13c}).

\subsection{The stochastic semigroup}

The construction of the mentioned semigroup $S$ is based on a
perturbation technique, developed in \cite{TV}, and some aspects of
the honesty theory \cite{hon,Mustapha}. Its adaptation to the
present context is given in the following three statements. Therein,
we deal with a Banach space $\mathcal{E}$ equipped with a cone of
positive elements, $\mathcal{E}^{+}$, that have the following
property. There exists a positive linear functional,
$\varphi_{\mathcal{E}}$, such that $\|u\|_{\mathcal{E}} =
\varphi_{\mathcal{E}}(u)$ whenever $u\in \mathcal{E}^{+}$. Thereby,
the norm $\| \cdot \|_{\mathcal{E}}$ is additive on
$\mathcal{E}^{+}$.
\begin{proposition}\cite[Theorem 2.2]{TV}
  \label{A1ln}
Let  $(A,\mathcal{D}_A)$ be the generator of a substochastic
semigroup, $T_0=\{T_0(t)\}_{t\geq 0}$. Let also $B: \mathcal{D}_A\to
\mathcal{E}$ be positive and such that $\varphi_{\mathcal{E}} (
(A+B)u) \leq 0$ for all $u\in \mathcal{D}_A^{+}:=\mathcal{D}_A \cap
\mathcal{E}^{+}$. Then, for each $r\in (0,1)$, the operator $A+rB$
generates a substochastic semigroup, $T_r=\{T_r(t)\}_{t\geq 0}$.
Furthermore, there exists a substochastic semigroup,
$T_1=\{T_1(t)\}_{t\geq 0}$, on $\mathcal{E}$ such that $\|T_1(t) u -
T_r (t) u\|_{\mathcal{E}} \to 0$ as $r\to 1^{-}$, for all $u\in
\mathcal{E}$ and uniformly in $t$ on each $[0,T]$, $T>0$. The
semigroup $T_1$ is generated by an extension of
$(A+B,\mathcal{D}_A)$.
\end{proposition}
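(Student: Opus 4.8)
The plan is to run the classical Kato--Voigt perturbation scheme, working entirely with resolvents and exploiting the additivity of the norm on $\mathcal{E}^{+}$. Throughout write $R_\lambda := (\lambda - A)^{-1}$, which exists for every $\lambda>0$ because $A$ generates the substochastic semigroup $T_0$; moreover $R_\lambda$ is positive and $\|\lambda R_\lambda\|\le 1$. Note also that for $u\in\mathcal{D}_A^{+}$ one has $\varphi_{\mathcal{E}}(Au)=\tfrac{d}{dt}\varphi_{\mathcal{E}}(T_0(t)u)|_{t=0}\le 0$. The decisive preliminary is a pointwise resolvent estimate. For $f\in\mathcal{E}^{+}$ put $u=R_\lambda f\in\mathcal{D}_A^{+}$, so that $Au=\lambda u-f$ and hence $\varphi_{\mathcal{E}}(Au)=\lambda\|u\|-\|f\|$. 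Inserting this into the hypothesis $\varphi_{\mathcal{E}}((A+B)u)\le 0$ and using $\|Bu\|=\varphi_{\mathcal{E}}(Bu)$ (valid since $Bu\ge 0$) yields
\[
 \|BR_\lambda f\|\ \le\ \|f\|-\lambda\|R_\lambda f\|,\qquad f\in\mathcal{E}^{+}.
\]
Since $K:=BR_\lambda$ is positive and the norm is determined on the cone, this gives the operator bound $\|K\|\le 1$.

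Next I would dispose of the case $r\in(0,1)$. Because $\|rK\|\le r<1$, the Neumann series produces a bounded positive inverse $(I-rK)^{-1}=\sum_{n\ge 0}(rK)^{n}$, and one checks directly that $R_\lambda(I-rK)^{-1}$ is the resolvent of $(A+rB,\mathcal{D}_A)$, an operator that is therefore closed. To bound its norm, take $f\in\mathcal{E}^{+}$, set $g=R_\lambda(I-rK)^{-1}f\in\mathcal{D}_A^{+}$, and write $\varphi_{\mathcal{E}}(Ag)+r\varphi_{\mathcal{E}}(Bg)=\varphi_{\mathcal{E}}((A+B)g)-(1-r)\varphi_{\mathcal{E}}(Bg)\le 0$, both summands being $\le 0$ by the hypothesis and by positivity of $Bg$. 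Since $\lambda\|g\|=\|f\|+\varphi_{\mathcal{E}}(Ag)+r\varphi_{\mathcal{E}}(Bg)$, this forces $\|\lambda(\lambda-A-rB)^{-1}\|\le 1$ for all $\lambda>0$, and the positive Hille--Yosida theorem makes $A+rB$ the generator of a substochastic semigroup $T_r$. This is the first assertion.

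The core of the proof is the passage $r\to 1^{-}$. Applying the basic estimate to $K^{n}f$ for $f\in\mathcal{E}^{+}$ gives the telescoping inequality $\|K^{n}f\|-\|K^{n+1}f\|\ge\lambda\|R_\lambda K^{n}f\|$, whence $\sum_{n\ge 0}\|R_\lambda K^{n}f\|\le\|f\|/\lambda$. Thus $\mathcal{R}_\lambda f:=\sum_{n\ge 0}R_\lambda K^{n}f$ converges, is positive, satisfies $\|\lambda\mathcal{R}_\lambda\|\le 1$, and — since $R_\lambda(I-rK)^{-1}=\sum_{n}r^{n}R_\lambda K^{n}$ with nonnegative terms increasing in $r$ — equals $\lim_{r\to 1^{-}}R_\lambda(I-rK)^{-1}f$ by monotone convergence. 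Because the uniform bound $\|T_r(t)\|\le 1$ provides stability and the range of $\mathcal{R}_\lambda$ contains $\mathcal{D}_A$ (hence is dense; see below), the Trotter--Kato approximation theorem furnishes a generator $G$ with $(\lambda-G)^{-1}=\mathcal{R}_\lambda$, generating a substochastic $C_0$-semigroup $T_1$, together with the convergence $T_r(t)\to T_1(t)$ strongly and uniformly for $t$ in compact intervals. I expect this step to be the main obstacle: producing the pointwise monotone limit $\mathcal{R}_\lambda$ is routine, whereas upgrading it to a genuine strongly continuous semigroup and controlling the mode of convergence (strong continuity at $t=0$ and uniformity on compacts) is exactly what the summability estimate and the density of the range are designed to unlock through the Hille--Yosida/Trotter--Kato machinery.

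Finally, to see that $T_1$ is generated by an extension of $(A+B,\mathcal{D}_A)$, I would verify the purely algebraic identity $\mathcal{R}_\lambda(\lambda-A-B)u=u$ for every $u\in\mathcal{D}_A$. Writing $w=(\lambda-A)u$, so that $u=R_\lambda w$ and $Bu=BR_\lambda w=Kw$, one has $(\lambda-A-B)u=w-Kw$, and the two series for $\mathcal{R}_\lambda w=\sum_{n\ge0}R_\lambda K^{n}w$ and $\mathcal{R}_\lambda Kw=\sum_{n\ge1}R_\lambda K^{n}w$ telescope, leaving exactly $R_\lambda w=u$. This shows $\mathcal{D}_A\subset\mathcal{D}(G)$ with $Gu=(A+B)u$ (and in particular that $\mathrm{Range}(\mathcal{R}_\lambda)\supset\mathcal{D}_A$, as used above), which completes the proof.
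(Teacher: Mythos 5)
You should first note that the paper contains no proof of this proposition at all: it is imported verbatim from Thieme and Voigt \cite[Theorem 2.2]{TV}, with only the remark that it extends Kato's perturbation theorem, cf.\ \cite[Sect. 2]{hon}. So there is nothing in the paper to compare against line by line; what you have written is a self-contained reconstruction, and it is correct --- indeed it is essentially the classical Kato--Voigt resolvent argument underlying the cited theorem. Your chain of steps is sound: the cone estimate $\|BR_\lambda f\|\le \|f\|-\lambda\|R_\lambda f\|$ for $f\in\mathcal{E}^{+}$; the Neumann series and the bound $\lambda\|g\|\le\|f\|$ giving generation of the substochastic $T_r$ for $r<1$; the telescoping summability $\sum_{n}\|R_\lambda K^{n}f\|\le\|f\|/\lambda$; the identification of $\mathcal{R}_\lambda=\sum_{n}R_\lambda K^{n}$ as the strong limit of the resolvents of $A+rB$; Trotter--Kato (stability $\|T_r(t)\|\le 1$ plus density of the range, which your final algebraic identity supplies without circularity) to produce $T_1$ and the locally uniform strong convergence; and the identity $\mathcal{R}_\lambda(\lambda-A-B)u=u$ on $\mathcal{D}_A$, which shows the generator extends $(A+B,\mathcal{D}_A)$. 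Two fine points deserve explicit mention in a polished write-up: (i) passing from the estimate on $\mathcal{E}^{+}$ to the operator bounds $\|K\|\le 1$ and $\|\lambda(\lambda-A-rB)^{-1}\|\le 1$ requires decomposing arbitrary $f$ as $f=f_1-f_2$ with $f_i\in\mathcal{E}^{+}$ and $\|f_1\|+\|f_2\|$ controlled by $\|f\|$; this is automatic in the $L^1$ setting where the proposition is applied (and is a standing assumption in \cite{TV}), but in a general ordered Banach space the decomposition constant would enter; (ii) positivity of $T_r$ and of $T_1$ should be stated --- it follows from positivity of the resolvents via the exponential formula and is inherited by strong limits, and it is what makes ``contraction semigroup'' into ``substochastic semigroup''. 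Neither is a gap in substance. For comparison, Kato's original route avoids Trotter--Kato: for $f\ge 0$ the family $T_r(t)f$ is nondecreasing in $r$ and norm-bounded, and additivity of the norm on the cone makes such monotone bounded families Cauchy, so $T_1(t)f$ can be defined directly as their limit; your resolvent route buys the locally uniform convergence asserted in the statement with essentially no extra work.
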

This statement is just an extended version of the celebrated Kato
perturbation theorem, cf. \cite[Sect. 2]{hon}. The semigroup $T_1$
may not be stochastic even if $\varphi_{\mathcal{E}} ( (A+B)u) = 0$.
In this case, $\|T_1(t) u\|_{\mathcal{E}}< \|u\|_{\mathcal{E}}$,
that  is, the evolution is dishonest. In order to establish the
honesty of $T_1$, one has to get additional information on its
properties. The first statement in this direction is a simple
consequence of Theorem 3.5 and Corollary 3.6 of \cite{hon}.
\begin{proposition}
  \label{A2pn}
The semigroup $T_1$ mentioned in Proposition \ref{A1ln} is honest if
and only if its generator is the closure of $(A+B,\mathcal{D}_A)$.
\end{proposition}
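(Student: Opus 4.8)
The plan is to recognize Proposition \ref{A2pn} as a direct instance of the abstract honesty criterion of \cite{hon}, so that the work consists mainly in checking that our data $(\mathcal{E}, \mathcal{E}^{+}, \varphi_{\mathcal{E}}, A, B)$ fit that framework verbatim; no new estimate is required beyond a verification of hypotheses.

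First I would confirm the standing assumptions of the honesty theory. The crucial structural hypothesis there is that $\mathcal{E}$ be an $L^1$-type (AL-)space, i.e. that the norm be additive on the positive cone. This is exactly the property recorded just before Proposition \ref{A1ln}: there is a positive functional $\varphi_{\mathcal{E}}$ with $\|u\|_{\mathcal{E}} = \varphi_{\mathcal{E}}(u)$ for $u \in \mathcal{E}^{+}$. The remaining inputs are also in place: $(A,\mathcal{D}_A)$ generates the substochastic semigroup $T_0$, the map $B:\mathcal{D}_A\to\mathcal{E}$ is positive, and $\varphi_{\mathcal{E}}((A+B)u)\leq 0$ for $u\in\mathcal{D}_A^{+}$. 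These are precisely the hypotheses under which Proposition \ref{A1ln} delivers the substochastic semigroup $T_1$, whose generator $G$ is an extension of $(A+B,\mathcal{D}_A)$.

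Next I would recall the mechanism behind the criterion. The generator $G$ is recovered from the limiting resolvent $R(\lambda,G)=\lim_{r\to 1^{-}}(\lambda-A-rB)^{-1}$, the strong limit of the Neumann series $\sum_{n\geq 0} R(\lambda,A)\,(rB\,R(\lambda,A))^{n}$. Since $\varphi_{\mathcal{E}}((A+B)u)\leq 0$ on $\mathcal{D}_A^{+}$, the rate of mass loss prescribed by the formal generator is $-\varphi_{\mathcal{E}}((A+B)\,\cdot\,)$; dishonesty means that the true generator $G$ dissipates strictly more than this, the excess being localized on $\mathcal{D}(G)\setminus\mathcal{D}(\overline{A+B})$, where $\overline{A+B}$ denotes the closure of $(A+B,\mathcal{D}_A)$. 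Theorem 3.5 of \cite{hon} expresses honesty as the vanishing of the functional measuring this excess dissipation on all of $\mathcal{D}(G)^{+}$, while Corollary 3.6 translates that vanishing into the operator equality $G=\overline{A+B}$. Combining the two statements yields the asserted equivalence in both directions.

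The only point requiring care—and the closest thing to an obstacle here—is the bookkeeping that matches our normalization to the one used in \cite{hon}: substochasticity is expressed through $\varphi_{\mathcal{E}}$ with $\|u\|_{\mathcal{E}}=\varphi_{\mathcal{E}}(u)$ on the cone, so that ``honest'' in the sense of \cite{hon} coincides with the controlled-loss statement we exploit later for the concrete instance $\mathcal{E}=\mathcal{X}$, where $\varphi_{\mathcal{E}}$ is the integration functional $\varphi(\,\cdot\,)=\int_{\Gamma}(\,\cdot\,)\,\lambda(d\gamma)$ of \eqref{2X}. Once this dictionary is fixed, Theorem 3.5 and Corollary 3.6 apply without modification and the proof is complete.
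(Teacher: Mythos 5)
Your proposal is correct and takes essentially the same route as the paper: the paper gives no independent argument for Proposition \ref{A2pn}, stating only that it is ``a simple consequence of Theorem 3.5 and Corollary 3.6 of \cite{hon}'' once the setting of Proposition \ref{A1ln} (AL-structure via $\varphi_{\mathcal{E}}$, substochastic $T_0$, positive $B$, $\varphi_{\mathcal{E}}((A+B)u)\leq 0$) is in place, which is exactly the reduction you carry out. Your extra discussion of the limiting resolvent and the excess-dissipation functional is accurate background on the mechanism in \cite{hon}, but nothing beyond the hypothesis check and the citation is required.
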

A more specific fact - applicable in $L^1$ spaces -- is provided by
the following statement.
\begin{proposition}
 \label{Mustaphapn} \cite[Theorem 2, page 156]{Mustapha}
In the setting of Proposition \ref{A1ln}, assume that $\mathcal{E}=
L^1 (\Omega, \nu)$ for appropriate $\Omega$ and $\nu$. Let there
exist $v\in \mathcal{D}_A$ such that: (a) $v$ is strictly positive;
(b) $(A+B)v \leq 0$. Both (a) and (b) hold $\nu$-almost everywhere
on $\Omega$. Then the generator of $T_1$ is the closure of
$(A+B,\mathcal{D}_A)$, and hence $T_1$ is honest -- by Proposition
\ref{A2pn}.
 \end{proposition}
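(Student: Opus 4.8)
Since Proposition~\ref{Mustaphapn} is quoted from \cite[Theorem~2, page~156]{Mustapha}, the cleanest route is to verify that the present hypotheses match those of that theorem --- an $L^1$ space with norm additive on the positive cone, a substochastic $T_0$ generated by $(A,\mathcal{D}_A)$, and a positive $B$ with $\varphi_{\mathcal{E}}((A+B)u)\le 0$ on $\mathcal{D}_A^{+}$ --- and then invoke it. Should a self-contained argument be wanted, I would organise it as follows. By Proposition~\ref{A2pn}, honesty of $T_1$ is \emph{equivalent} to its generator being the closure of $(A+B,\mathcal{D}_A)$; the whole task therefore collapses to this single identification of the generator, and the strictly positive super-solution $v$ is precisely the device that forces it.

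The plan is to work at the level of resolvents. Writing $G$ for the generator of $T_1$, the Kato--Voigt construction in Proposition~\ref{A1ln} realises its resolvent as the limit, as $r\to 1^{-}$, of the truncated Neumann series
\[
 R_r(\lambda)=\sum_{k\ge 0}R(\lambda,A)\bigl(rB\,R(\lambda,A)\bigr)^{k},
\]
and always yields an extension of $(A+B,\mathcal{D}_A)$. The two coincide --- equivalently, by Proposition~\ref{A2pn}, $T_1$ is honest --- precisely when this series is \emph{exhaustive}, i.e. when no mass is lost in the passage $r\to 1^{-}$. The strictly positive $v\in\mathcal{D}_A$ enters as a super-solution of the stationary equation: rewriting (b) as $(\lambda-A-B)v\ge \lambda v>0$ for $\lambda>0$ exhibits $v$ as a strict Lyapunov element, and a sub-/super-solution comparison in $L^1(\Omega,\nu)$ --- testing the resolvent remainder against the everywhere-positive $v$ and invoking monotone convergence of the partial sums --- forces that remainder to vanish $\nu$-almost everywhere. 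Hence $G=\overline{(A+B,\mathcal{D}_A)}$, and honesty follows from Proposition~\ref{A2pn}.

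The step I expect to be the crux --- and the reason \cite{Mustapha} is cited rather than reproved here --- is the passage from the \emph{pointwise}, $\nu$-a.e. inequality $(A+B)v\le 0$ to the \emph{global} statement that the resolvent remainder vanishes: one must justify interchanging the limit $r\to 1^{-}$ with integration against $\nu$ and rule out mass escaping to infinity inside the perturbation series. The two structural features isolated just before the statement are exactly what make this legitimate: additivity of $\|\cdot\|_{\mathcal{E}}$ on $\mathcal{E}^{+}$ (so that the mass balance is governed by the single linear functional $\varphi_{\mathcal{E}}$ evaluated on nonnegative iterates) and the \emph{strict} positivity of $v$ (which upgrades an almost-everywhere inequality to genuine control with no loss). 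Everything else is the standard substochastic bookkeeping already packaged in Propositions~\ref{A1ln} and~\ref{A2pn}; in the body of the paper the only genuinely new work is then to exhibit a concrete strictly positive $v\in\mathcal{D}(A)$ with $Lv\le 0$, which is where the parameter condition $m>m_1$ and the bound (\ref{M22}) will be used.
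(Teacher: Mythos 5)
The paper offers no proof of this proposition at all---it is imported directly from \cite[Theorem 2, page 156]{Mustapha}---so your primary move, verifying that the standing hypotheses (substochastic $T_0$, positive $B$, $\varphi_{\mathcal{E}}((A+B)u)\leq 0$ on $\mathcal{D}_A^{+}$, $L^1$ setting) match those of the cited theorem and then invoking it, is exactly the paper's approach. Your supplementary sketch of the internal mechanism (Kato--Voigt resolvent series, the super-solution inequality $(\lambda-A-B)v\geq\lambda v>0$, strict positivity of $v$ forcing the mass defect to vanish) is consistent with the cited proof, and you correctly locate the paper's actual new work in constructing the concrete $v$ with $Lv\leq 0$, which is Lemma \ref{A2lm}.
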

Now we can turn to our models. For $\varepsilon \in (0,1)$ and $A$
and $B$ as in (\ref{M6}), we set
\begin{equation}
  \label{M33}
L^\varepsilon = A + (1-\varepsilon) B.
\end{equation}
Recall that the domain of both $A$ and $B$ is $\mathcal{D}(A)$
defined in (\ref{M13}).
\begin{lemma}
  \label{A1lm}
For each $\varepsilon \in (0,1)$, the operator $(L^\varepsilon,
\mathcal{D}(A))$ generates a substochastic semigroup, $S^\varepsilon
= \{S^\varepsilon(t)\}_{t\geq 0}$. Furthermore, there exists a
substochastic semigroup, $S = \{S(t)\}_{t\geq 0}$, on $\mathcal{X}$
such that $S^\varepsilon (t) \to S(t)$ as $\varepsilon \to 0$,
strongly and uniformly on $[0,T]$, $T>0$. The semigroup $S$ is
generated by an extension of the operator $(L,\mathcal{D}(A))$.
\end{lemma}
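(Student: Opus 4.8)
My plan is to verify that the hypotheses of Proposition \ref{A1ln} hold for the decomposition $L^\varepsilon = A + (1-\varepsilon)B$, and then extract the limiting semigroup $S$ as $\varepsilon \to 0$. First I would confirm the role of $A$: by the corollaries preceding the statement, $(D,\mathcal{W})$ generates the sub-stochastic shift semigroup $S_0$, and adding the bounded-below multiplication $-m|\gamma|$ to $D$ produces again a sub-stochastic semigroup with generator $(A,\mathcal{D}(A))$, since this term only diminishes the norm pointwise (the death mechanism is purely dissipative). The functional here is $\varphi(f)=\int_\Gamma f(\gamma)\lambda(d\gamma)$, whose additivity on $\mathcal{X}^{+}$ is exactly the structure demanded in Proposition \ref{A1ln}, and $B=B_1+B_2$ is positive by the remark after (\ref{M6}).

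The crucial computation is the sign condition $\varphi(L f)=\varphi((A+B)f)\le 0$ for $f\in\mathcal{D}^{+}(A)$, which for the full perturbation $A+B$ (the case $r=1$, equivalently $\varepsilon=0$) I expect to be an \emph{equality} $\varphi((A+B)f)=0$. Indeed, by (\ref{0c}) we have $\int_\Gamma f(\gamma\cup 0)\lambda(d\gamma)=\|Df\|=\int_\Gamma (Df)(\gamma)\lambda(d\gamma)\cdot(-1)$ in the appropriate bookkeeping; more precisely, applying $\varphi$ to each term of (\ref{M2}), the drift contributes $\varphi(Df)$, the death term $-m\varphi(|\gamma|f)$, the influx $B_2$ contributes $+m\int_\Gamma\int_{\mathds{R}_{+}}f(\gamma\cup x)dx\,\lambda(d\gamma)=m\varphi(|\gamma|f)$ by (\ref{La}), and the branching term $B_1$ contributes $\|B_1 f\|=\|Df\|$ by (\ref{M11}). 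The death and influx terms cancel, and the drift plus branching cancel via $\varphi(Df)=-\|Df\|$, which follows by integrating (\ref{11}) and noting the boundary contribution at the edge is precisely $\int_\Gamma f(\gamma\cup 0)\lambda(d\gamma)=\|Df\|$ from (\ref{0c}). Hence $\varphi((A+B)f)=0$, and therefore $\varphi((A+(1-\varepsilon)B)f)=-\varepsilon\|Df\|\le 0$, so the hypothesis of Proposition \ref{A1ln} holds (with room to spare for each $\varepsilon\in(0,1)$).

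With the sign condition in hand, Proposition \ref{A1ln} applied to the pair $(A,(1-\varepsilon)B)$ — or rather, reading $r=1-\varepsilon$ against the fixed perturbation $B$ — immediately delivers a sub-stochastic semigroup $S^\varepsilon$ generated by $(L^\varepsilon,\mathcal{D}(A))$ for each $\varepsilon\in(0,1)$, together with the limiting sub-stochastic semigroup $S$ obtained as the strong limit $S^\varepsilon(t)\to S(t)$ uniformly on compact time intervals as $\varepsilon\to 0^{+}$ (i.e. $r\to 1^{-}$), whose generator extends $(A+B,\mathcal{D}(A))=(L,\mathcal{D}(A))$. This is the entire content of the statement, so the lemma follows essentially by verifying the abstract hypotheses.

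The main obstacle I anticipate is the boundary term in establishing $\varphi(Df)=-\|Df\|$ and hence the exact cancellation. The identity (\ref{0c}) was proved only for $f\in\mathcal{W}^{+}$, so I must be careful that $\mathcal{D}(A)=\mathcal{W}\cap\mathcal{X}_{h_m}$ (see (\ref{M13})) is contained in the regime where (\ref{0c}) and (\ref{M11}) are valid, and that all the integrals involving $|\gamma|f$ are finite — this is exactly why the domain carries the weight $h_m$ and why (\ref{M13a}) guarantees $B:\mathcal{D}(A)\to\mathcal{X}$. A secondary technical point is confirming that $A$ itself generates a sub-stochastic (not merely positive) semigroup: the multiplication operator $-m|\gamma|$ is unbounded, but since it is nonpositive it is dissipative, and a Phillips-type perturbation argument (or direct resolvent estimate as in Corollary \ref{1pnco}) shows the sum $D-m|\gamma|$ remains a generator without enlarging the norm bound beyond $1$.
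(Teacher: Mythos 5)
Your proposal is correct and follows essentially the same route as the paper: it verifies the hypotheses of Proposition \ref{A1ln} for the pair $(A,B)$ with the functional $\varphi(f)=\int_\Gamma f\,d\lambda$, obtains the exact cancellation $\varphi((A+B)f)=0$ on $\mathcal{D}^{+}(A)$ from (\ref{La}), (\ref{M11}) and (\ref{0c})/(\ref{2X}), and then reads off $S^\varepsilon$ and the limiting semigroup $S$ from that proposition with $r=1-\varepsilon$. The only blemish is a harmless bookkeeping slip: $\varphi(L^\varepsilon f)$ equals $-\varepsilon\varphi(Bf)=-\varepsilon\bigl(\|Df\|+m\varphi(|\gamma|f)\bigr)$ rather than $-\varepsilon\|Df\|$, which does not affect the sign condition or the conclusion.
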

\begin{proof}
The operator $(A,\mathcal{D}(A))$ generates a substochastic
semigroup, $S^0$, with
\begin{equation*}
( S^0 (t) f)(\gamma) = \exp\left( - t m |\gamma|\right)f(\gamma_t),
\end{equation*}
see (\ref{12a}). Obviously, $B$ is a positive operator; hence,
$B:\mathcal{D}^{+}(A)\to \mathcal{X}^{+}$, see (\ref{M13a}).  By
(\ref{2b}), (\ref{2X}), and then by (\ref{M6}), for $f\in
\mathcal{D}^{+}(A)$, we obtain
\begin{eqnarray*}
\varphi((A+B)f)& = & \int_{\Gamma} (D f)(\gamma) \lambda ( d \gamma)
+ \int_{\Gamma} \left(\sum_{\{x,y\} \subset \gamma} b(x,y)
f(\gamma\setminus \xi \cup 0) \right) \lambda ( d \gamma) \qquad \\[.2cm] \nonumber &
- & m\int_{\Gamma} |\gamma| f(\gamma) \lambda ( d \gamma) +
 m \int_{\Gamma} \int_{\bar{\mathds{R}}_{+}} f(\gamma\cup x) d x
\lambda (d \gamma) \\[.2cm] \nonumber & = &
 \int_{\Gamma} (D f)(\gamma) \lambda ( d \gamma) +
 \frac{1}{2}\int_{\Gamma}\left(\int_{\bar{\mathds{R}}_{+}^2}  b(x,y)
 d x d y \right)
f(\gamma \cup 0) \lambda ( d \gamma)  \\[.2cm] \nonumber
& - & m\int_{\Gamma} |\gamma| f(\gamma) \lambda ( d \gamma) +
\int_{\Gamma} \left(\sum_{x\in \gamma} m \right) f(\gamma)
\lambda ( d \gamma)\\[.2cm] \nonumber & = & \int_{\Gamma} (L^0
f)(\gamma) \lambda ( d \gamma) = 0,
\end{eqnarray*}
see (\ref{2b}) and (\ref{M3}). Since $B$ is positive, this yields
that, for each $\varepsilon \in (0,1)$ and $f\in
\mathcal{D}^{+}(A)$, the following holds $\varphi (L^\varepsilon f)
\leq 0$. Then $(L^\varepsilon, \mathcal{D}(A))$, see (\ref{M33}),
generates $S^\varepsilon$ as stated, and the semigroup $S$ is
obtained in accordance with Proposition \ref{A1ln}.
\end{proof}
\begin{lemma}
 \label{A2lm}
Let $m_1$ be as in (\ref{M27z}). Then, for $m> m_1$, the semigroup
$S$ constructed in Lemma \ref{A1lm} is generated by the closure of
$(A+B,\mathcal{D}(A))$ and hence is honest therefore.
\end{lemma}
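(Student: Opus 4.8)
The plan is to deduce honesty from Proposition \ref{Mustaphapn}, applied with $\mathcal{E}=\mathcal{X}=L^{1}(\Gamma,\lambda)$ and $A$, $B=B_{1}+B_{2}$ as in (\ref{M6}); the positivity of $B$ and the identity $\varphi((A+B)u)=0$ needed there were already checked in Lemma \ref{A1lm}. It therefore suffices to exhibit a strictly positive $v\in\mathcal{D}(A)=\mathcal{W}\cap\mathcal{X}_{h_{m}}$ with $(A+B)v\le 0$ holding $\lambda$-almost everywhere, after which Proposition \ref{A2pn} upgrades this to honesty of $S$. The natural candidate is a combinatorially weighted product state
\begin{equation*}
  v(\gamma)=|\gamma|!\,c^{|\gamma|}\prod_{x\in\gamma}\phi_{\beta}(x),\qquad \phi_{\beta}(x)=(1+x)^{-\beta},
\end{equation*}
with parameters $c>0$ and $\beta$ to be tuned, the weight $\phi_{\beta}$ being chosen to match the tail decay encoded in (\ref{M22}). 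First I would record that $v\in\mathcal{D}(A)$: membership $v\in\mathcal{W}$ follows from Remark \ref{1rk} and convergence of the series in (\ref{12}), while $v\in\mathcal{X}_{h_{m}}$ (hence integrability of $k_{v}$) reduces to a geometric series that converges precisely when $c\int_{\mathds{R}_{+}}\phi_{\beta}<1$.

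The core computation is to evaluate the four contributions to $(Lv)(\gamma)/v(\gamma)$ via (\ref{11}), (\ref{M6}). The drift yields $-\beta\sum_{x\in\gamma}(1+x)^{-1}$, the death term and the immigration term $B_{2}$ combine into a part linear in $|\gamma|$ plus a constant, and the fission term $B_{1}$ produces $\tfrac{1}{c|\gamma|}$ times the pair sum $\sum_{\{x,y\}\subset\gamma}b(x,y)/(\phi_{\beta}(x)\phi_{\beta}(y))$. The decisive step is to insert (\ref{M22}), writing $b(x,y)/(\phi_{\beta}(x)\phi_{\beta}(y))\le b^{*}[w(x)u(y)+u(x)w(y)]$ with $w,u$ monotone decreasing, and then collapse the quadratic pair sum to a single sum by a Chebyshev-type summation (each particle lies in $|\gamma|-1$ pairs), turning the $B_{1}$ contribution into $\tfrac{b^{*}}{c}\sum_{x\in\gamma}(1+x)^{2\beta-2\sigma-1}$. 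Consequently $(A+B)v\le 0$ reduces, up to lower-order terms, to the single per-particle inequality
\begin{equation*}
  -\frac{\beta}{1+x}+\frac{b^{*}}{c}(1+x)^{2\beta-2\sigma-1}\le m\Big(1-c\!\int_{\mathds{R}_{+}}\phi_{\beta}\Big),\qquad x\ge 0 .
\end{equation*}
Optimising the left-hand maximum over $x$ and then over the admissible pair $(c,\beta)$ — the choice $c=2$ producing the factor $b^{*}/2$ — should reproduce exactly the threshold $m_{1}$ of (\ref{M27z}), so that the inequality holds for every $m>m_{1}$.

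The step I expect to be hardest is the domination of the fission term: a priori $B_{1}v$ grows \emph{quadratically} in $|\gamma|$, since each of the $\sim|\gamma|^{2}$ pairs may fission, whereas the compensating death term $-m|\gamma|\,v$ is only linear. It is precisely the factorial weight $|\gamma|!$ that supplies the extra factor $1/|\gamma|$ restoring a linear, per-particle balance, and it is the decay hypothesis (\ref{M22}) with $\sigma\ge 3$ that keeps the per-particle rate bounded and integrable; obtaining the \emph{sharp} constant $m_{1}$ then rests on the two-parameter optimisation above. A secondary point requiring care is the cluster of low-order ($|\gamma|\in\{0,1,2\}$) and constant terms coming from $B_{2}$ — in particular the strictly positive value $(A+B)v(\varnothing)=m\int_{\mathds{R}_{+}}v(\{x\})\,dx$ — which must be absorbed using the margin afforded by the strict inequality $m>m_{1}$ together with the strict positivity of $v$.
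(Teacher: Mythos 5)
Your overall route is the paper's own: the paper also applies Proposition \ref{Mustaphapn} to a factorially weighted product element, namely your candidate with $c=1$ and $\beta=\sigma$, after the same preliminary checks (positivity of $B$, the identity $\varphi((A+B)f)=0$ from Lemma \ref{A1lm}, and $v\in\mathcal{D}(A)$). The genuine gaps are exactly in the two steps you leave open. First, the threshold computation --- which is the entire content of the lemma --- is only asserted, and it does not come out as you claim. Carrying out your own reduction for $\beta=\sigma$ (Chebyshev collapse of the pair sum, death/immigration contributing $m\bigl[c(|\gamma|+1)/(\sigma-1)-|\gamma|\bigr]$), the binding configurations are not the low-order ones you worry about, but clouds of $n$ particles concentrated at the origin with $n\to\infty$; these force
\begin{equation*}
m \ \geq\ \frac{(\sigma-1)\left(b^{*}/c-\sigma\right)}{\sigma-1-c},
\end{equation*}
while singletons drifting to infinity force $c\leq(\sigma-1)/2$. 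At $\sigma=3$ the best this family can achieve is therefore $m\geq 2(b^{*}-3)$ (at $c=1$), whereas the lemma asserts honesty for all $m>m_{1}=b^{*}-6$; no admissible pair $(c,\beta)$ visibly reproduces the denominator $2\sigma-5$ of (\ref{M27z}) (your suggestion $c=2$ produces $\sigma-3$, and is in any case inadmissible at $\sigma=3$). So the proposal, made rigorous, establishes honesty only on a strictly smaller range of $m$ than the statement requires; ``should reproduce exactly $m_{1}$'' is precisely the step that fails.

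Second, the empty configuration is not a removable ``secondary point'', and your proposed fix cannot work. Proposition \ref{Mustaphapn} demands $(A+B)v\leq 0$ pointwise $\lambda$-almost everywhere, the set $\{\varnothing\}$ has $\lambda$-measure one by (\ref{L1}), and for every strictly positive $v\in\mathcal{D}(A)$ and every $m>0$ one has $((A+B)v)(\varnothing)=(B_{2}v)(\varnothing)=m\int_{\mathds{R}_{+}}v(\{x\})\,dx>0$. This quantity is itself proportional to $m$, so no margin coming from the strict inequality $m>m_{1}$, and no appeal to the strict positivity of $v$, can absorb it: the hypothesis fails on a set of positive measure, for your $v$ and for any other admissible candidate. (For comparison, the paper disposes of this case by asserting that both terms of (\ref{P8}) vanish at $\gamma=\varnothing$, an assertion that is incompatible with the definition of $B_{2}$ in (\ref{M6}); so the difficulty you spotted is intrinsic to the criterion as stated, and it requires a real resolution --- e.g.\ verifying the hypotheses of \cite[Theorem 2]{Mustapha} in their original form, or a separate treatment of the mass at $\varnothing$ --- rather than an absorption argument.)
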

\begin{proof}
Here we employ Proposition \ref{Mustaphapn}. To this end we
introduce $v\in \mathcal{D}(A)$ by the following expression
 \begin{equation*}
  v(\gamma) = |\gamma|! \prod_{x\in \gamma} \phi_\sigma (x), \qquad \sigma \geq 3,
 \end{equation*}
where $\phi_\sigma$ is as in (\ref{sigma}). It is clearly strictly
positive everywhere on $\Gamma$. Let us show that $v\in
\mathcal{D}(A)$, see (\ref{M13}). By (\ref{11}) and then by
(\ref{L1}) and (\ref{La}) we obtain
\begin{gather*}
  \|D v\| = \sigma \int_{\Gamma} |\gamma|! \sum_{x\in \gamma}
  \phi_{\sigma + 1} (x) \prod_{y\in \gamma\setminus x} \phi_\sigma
  (y) \lambda ( d \gamma)\\[.2cm]  = \sigma \int_{\Gamma}
  (|\gamma|+1)! \left(\int_{0}^{+\infty} \phi_{\sigma +1}(x) d x
  \right) \prod_{y\in \gamma} \phi_\sigma
  (y) \lambda ( d \gamma)\\[.2cm]  = \sum_{n=0}^{+\infty} (n+1)
  (\sigma - 1)^{-n} =
  \Big( \frac{\sigma-1}{\sigma -2} \Big)^2< \infty.
\end{gather*}
Hence, $v\in \mathcal{W}$. Likewise,
\begin{gather*}
  \|v \|_{h_m} = \sum_{n=0}^{+\infty} (\sigma-1)^{-n} + m \sum_{n=1}^{+\infty} n (\sigma - 1)^{-n} =
\frac{\sigma-1}{\sigma-2} + m \frac{\sigma -1}{(\sigma-2)^2} <
\infty,
\end{gather*}
that yields $v\in \mathcal{D}(A)$. Thus, to apply Proposition
\ref{Mustaphapn} we have to show that
\begin{equation}
  \label{P8}
 \forall \gamma \in \Gamma \qquad   (A v) (\gamma) + (Bv)(\gamma) \leq 0.
\end{equation}
For $\gamma = \varnothing$, both terms on the left-hand side of
(\ref{P8}) vanish. For $\gamma=\{x\}$,
\[
{\rm LHS(\ref{P8})} = - \sigma \phi_{\sigma + 1} (x) -
m\frac{\sigma-3}{\sigma -1} < 0,
\]
whenever $\sigma \geq 3$. For $|\gamma|\geq 2$, we have
\[
(A v)(\gamma) = - \sigma |\gamma|! \sum_{x\in \gamma} \phi_{\sigma +
1} (x) \prod_{y\in \gamma\setminus x} \phi_\sigma (y) - m |\gamma|
|\gamma|!  \prod_{y\in \gamma} \phi_\sigma (y).
\]
Now by (\ref{M22}), we obtain
\begin{gather*}
(B v)(\gamma) = (|\gamma|-1)! \sum_{\{x,y\}\subset \gamma} b(x,y)
\prod_{z\in \gamma\setminus \{x,y\}} \phi_\sigma (z) + (|\gamma|+1)! \frac{m}{\sigma - 1} \prod_{x\in \gamma} \phi_\sigma (x) \\[.2cm]
\nonumber \leq (|\gamma|- 1)! (|\gamma|-1) b^* \sum_{x\in \gamma}
\phi_{\sigma + 1} (x) \prod_{y\in \gamma\setminus x} \phi_\sigma (y)
+( |\gamma|+1)! \frac{m}{\sigma - 1} \prod_{x\in \gamma} \phi_\sigma
(x).
\end{gather*}
Then
\begin{gather}
  \label{P10}
{\rm LHS(\ref{P8})} \leq - \left(\sigma - \frac{b^*}{2}
\right)|\gamma|! \sum_{x\in \gamma} \phi_{\sigma+1}(x) \prod_{y\in
\gamma\setminus x} \phi_\sigma (y)  - m \frac{2\sigma - 5}{\sigma-1}
|\gamma||\gamma|! \prod_{x\in \gamma} \phi_\sigma (x).
\end{gather}
If $b(x,y)$ is such that $b^*\leq 2\sigma$, we take $m_1=0$ and
obtain (\ref{P8}). For $b^*\leq 2\sigma$, we use the fact that
$\phi_{\sigma+1} (x) \leq \phi_\sigma (x)$, $x\geq 0$, and then get
from (\ref{P10}) the following
\[
{\rm LHS(\ref{P8})} \leq \left( \frac{b^*}{2} - \sigma -
\frac{2\sigma - 5}{\sigma -1}m\right) |\gamma||\gamma|!  \prod_{x\in
\gamma} \phi_\sigma (x) \leq 0,
\]
where the latter inequality holds in view of the assumed $m\geq
m_1$, see (\ref{M27z}).
\end{proof}

\subsection{The boundedness}

To prove the boundedness which yields (\ref{M13c})  we are going to
employ another statement of \cite{TV}. Thus, in the context of
Proposition \ref{A1ln} we further impose the following.
\begin{assumption}
  \label{Aass}
There exists a linear subspace, $\widetilde{\mathcal{ E}} \subset
\mathcal{ E}$, which has the following properties:
\begin{itemize}
  \item[(i)] $\widetilde{\mathcal{ E}}$ is dense in $\mathcal{ E}$ in the norm
  $\|\cdot\|_{\mathcal{ E}}$.
  \item[(ii)] There exists a norm, $\|\cdot\|_{\widetilde{\mathcal{ E}}}$, on $\widetilde{\mathcal{ E}}$ that makes
  it a Banach space and the  embedding $\widetilde{\mathcal{E}}$ into $\mathcal{E}$ is continuous.
  \item[(iii)] $\widetilde{\mathcal{ E}}^{+}:= \widetilde{\mathcal{ E}}\cap \mathcal{ E}^{+}$ is a generating cone
  in $\widetilde{\mathcal{ E}}$. The norm $\|\cdot\|_{\widetilde{\mathcal{ E}}}$ is additive on $\widetilde{\mathcal{ E}}^{+}$
  and hence there exists a linear functional, $\varphi_{\widetilde{\mathcal{
  E}}}$, such that $\|u\|_{\widetilde{\mathcal{ E}}}= \varphi_{\widetilde{\mathcal{
  E}}}(u)$ whenever $u\in\widetilde{\mathcal{
  E}}$.
\item[(iv)] The cone $\widetilde{\mathcal{ E}}^{+}$ is dense in
$\mathcal{ E}^{+}$.
\end{itemize}
\end{assumption}
For $(A,\mathcal{D}_A)$ as in Proposition \ref{A1ln}, set
$\widetilde{\mathcal{D}}_A= \{ u \in \mathcal{D}: A u \in
\widetilde{\mathcal{E}}\}$.
\begin{proposition}\cite[Theorem 2.6]{TV}
  \label{A3pn}
Let the assumption of Proposition \ref{A1ln} be satisfied. Assume
also that $\widetilde{\mathcal{ E}}$ is a subspace of $\mathcal{ E}$
which satisfies Assumption \ref{Aass}. Additionally, assume that
\begin{itemize}
  \item[(a)] The restrictions $T_0(t)|_{\widetilde{\mathcal{ E}}}$
  constitute a $C_0$-semigroup in the norm of $\widetilde{\mathcal{
  E}}$, generated by
  $(A,\widetilde{\mathcal{D}}_A)$.
\item[(b)] $B: \widetilde{\mathcal{D}}_A \to
\widetilde{\mathcal{E}}$.
\item[(d)] The following
holds: $\varphi_{\widetilde{\mathcal{E}}} ((A+B)u) \leq 0$.
\end{itemize}
Then the the semigroup $T_1 = \{T_1(t)\}_{t\geq 0}$ from Proposition
\ref{A1ln} leaves $\widetilde{\mathcal{E}}$ invariant.  The
restrictions $T_1(t)|_{\widetilde{\mathcal{E}}}$ constitute a
substochastic semigroup on $\widetilde{\mathcal{E}}$.
\end{proposition}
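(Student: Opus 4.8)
The plan is to apply Proposition~\ref{A1ln} a second time, now taking $\widetilde{\mathcal{E}}$ rather than $\mathcal{E}$ as the underlying Banach space. By Assumption~\ref{Aass}(iii), $\widetilde{\mathcal{E}}$ carries exactly the structure required there: its positive cone $\widetilde{\mathcal{E}}^{+}$ is generating and the norm $\|\cdot\|_{\widetilde{\mathcal{E}}}$ is additive on it, realized by the functional $\varphi_{\widetilde{\mathcal{E}}}$. Hypothesis (a) says that $\widetilde{T}_0 := T_0|_{\widetilde{\mathcal{E}}}$ is a $C_0$-semigroup on $\widetilde{\mathcal{E}}$ generated by $(A,\widetilde{\mathcal{D}}_A)$, and it is substochastic there since the restriction of a positive contraction is again a positive contraction on the smaller cone. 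Hypothesis (b) gives $B:\widetilde{\mathcal{D}}_A \to \widetilde{\mathcal{E}}$, with $B$ still positive; finally hypothesis (d) is precisely the sign condition $\varphi_{\widetilde{\mathcal{E}}}((A+B)u)\le 0$ needed in Proposition~\ref{A1ln}. Hence all its hypotheses hold in $\widetilde{\mathcal{E}}$, and it produces a substochastic semigroup $\widetilde{T}_1=\{\widetilde{T}_1(t)\}_{t\ge 0}$ on $\widetilde{\mathcal{E}}$, obtained as the limit (as $r\to 1^{-}$) of the substochastic semigroups $\widetilde{T}_r$ generated by $A+rB$ in $\widetilde{\mathcal{E}}$, and generated by an extension of $(A+B,\widetilde{\mathcal{D}}_A)$.

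The crux is then to identify $\widetilde{T}_1(t)$ with the restriction $T_1(t)|_{\widetilde{\mathcal{E}}}$, so that $\widetilde{\mathcal{E}}$ is genuinely invariant under the semigroup $T_1$ built in $\mathcal{E}$. I would argue at the level of the approximants. For each $r\in(0,1)$ the semigroups $T_r$ on $\mathcal{E}$ and $\widetilde{T}_r$ on $\widetilde{\mathcal{E}}$ are both given by the same Dyson--Phillips (Neumann) series built from $T_0$ and $rB$; each term of this series is one and the same operator, which by (a) and (b) maps $\widetilde{\mathcal{E}}$ into itself. Since the embedding $\widetilde{\mathcal{E}}\hookrightarrow\mathcal{E}$ is continuous by Assumption~\ref{Aass}(ii) and the series converges in both norms for $r<1$, one obtains $\widetilde{T}_r(t)u=T_r(t)u$ for every $u\in\widetilde{\mathcal{E}}$ and $t\ge 0$. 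Letting $r\to 1^{-}$: in $\mathcal{E}$ one has $T_r(t)u\to T_1(t)u$ by Proposition~\ref{A1ln}, while in $\widetilde{\mathcal{E}}$ one has $\widetilde{T}_r(t)u\to\widetilde{T}_1(t)u$; continuity of the embedding forces the two limits to agree inside $\mathcal{E}$, whence $T_1(t)u=\widetilde{T}_1(t)u\in\widetilde{\mathcal{E}}$. This simultaneously establishes the invariance of $\widetilde{\mathcal{E}}$ and shows that $T_1(t)|_{\widetilde{\mathcal{E}}}$ coincides with the substochastic semigroup $\widetilde{T}_1$, which is the assertion.

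The main obstacle I anticipate is exactly this compatibility of the two perturbation constructions, namely proving $\widetilde{T}_1=T_1|_{\widetilde{\mathcal{E}}}$ and not merely that each is \emph{some} extension of $A+B$. The delicate input is hypothesis (a): it is not enough that $T_0(t)$ happens to map $\widetilde{\mathcal{E}}$ into itself; one needs the restricted family to be strongly continuous in the finer norm and to have generator exactly $(A,\widetilde{\mathcal{D}}_A)$, for only then does the term-by-term Dyson--Phillips series in $\widetilde{\mathcal{E}}$ reproduce the one in $\mathcal{E}$. Combined with (b), which guarantees that $B$ does not leave $\widetilde{\mathcal{D}}_A$, this secures that each approximant restricts correctly, and the remaining work is to transport the uniform-on-$[0,T]$ convergence from Proposition~\ref{A1ln} across the continuous embedding. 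A clean way to package everything is to note that $\widetilde{\mathcal{E}}$, equipped with $\varphi_{\widetilde{\mathcal{E}}}$, satisfies verbatim the structural hypotheses used in the proof of Proposition~\ref{A1ln}, so that uniqueness of the minimal semigroup construction in each space, together with the embedding, yields the identification.
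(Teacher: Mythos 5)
First, a structural remark: the paper itself offers no proof of Proposition \ref{A3pn} at all --- it is imported verbatim from \cite[Theorem 2.6]{TV} --- so there is no internal argument to compare yours against; your attempt must be judged on its own merits. On that basis, your overall strategy is the natural one: re-apply Proposition \ref{A1ln} inside $\widetilde{\mathcal{E}}$ (whose AL-structure is supplied by Assumption \ref{Aass}(iii)), identify the approximating semigroups $T_r$ and $\widetilde{T}_r$ for $r<1$ through their series representations and the continuous embedding, and pass to the limit $r\to 1^{-}$ to get both the invariance of $\widetilde{\mathcal{E}}$ and the coincidence $T_1(t)|_{\widetilde{\mathcal{E}}}=\widetilde{T}_1(t)$.

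There is, however, one genuinely false step, and it sits exactly where the content of the proposition lies. You claim that $\widetilde{T}_0:=T_0|_{\widetilde{\mathcal{E}}}$ is substochastic on $\widetilde{\mathcal{E}}$ ``since the restriction of a positive contraction is again a positive contraction on the smaller cone.'' Positivity is indeed inherited, because $\widetilde{\mathcal{E}}^{+}\subset\mathcal{E}^{+}$; contractivity is not, because $\|\cdot\|_{\widetilde{\mathcal{E}}}$ is a strictly finer norm about which the $\mathcal{E}$-contraction property says nothing. The paper's own application shows this is not pedantry: there $\widetilde{\mathcal{E}}=\mathcal{X}_{h_{\varsigma,\alpha}}$ and $(T_0(t)f)(\gamma)=e^{-tm|\gamma|}f(\gamma_t)$, so the drift moves particles toward the origin, where the weight $\psi_\alpha$ is largest; for a positive $f$ the $\psi_\alpha$-part of the weighted norm picks up a factor $e^{(\alpha-m)t}$, and contraction in $\|\cdot\|_{h_{\varsigma,\alpha}}$ fails outright when $m<\alpha$, even though hypothesis (a) still holds. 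If contraction in the weighted norm were automatic, the threshold $m_2$ in (\ref{P}) and the whole second half of Theorem \ref{1tm} would be vacuous. Since Proposition \ref{A1ln} requires its unperturbed semigroup to be substochastic --- and hypothesis (a) only grants $C_0$-ness of the restriction --- you must \emph{derive} the contraction property from the remaining hypotheses rather than inherit it. This can be done: for $u\in\widetilde{\mathcal{D}}_A\cap\mathcal{E}^{+}$, hypothesis (b) and the positivity of $B$ give $Bu\in\widetilde{\mathcal{E}}^{+}$, hence $\varphi_{\widetilde{\mathcal{E}}}(Bu)\geq 0$, and then (d) yields $\varphi_{\widetilde{\mathcal{E}}}(Au)\leq -\varphi_{\widetilde{\mathcal{E}}}(Bu)\leq 0$; applying this along the orbit $t\mapsto \widetilde{T}_0(t)u$, which stays in $\widetilde{\mathcal{D}}_A\cap\mathcal{E}^{+}$ by (a) and positivity, gives $\|\widetilde{T}_0(t)u\|_{\widetilde{\mathcal{E}}}\leq\|u\|_{\widetilde{\mathcal{E}}}$, and density of $\widetilde{\mathcal{D}}_A^{+}$ in $\widetilde{\mathcal{E}}^{+}$ (via positivity of the resolvent of $\widetilde{T}_0$) finishes the repair. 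A secondary, presentational point: since $B$ is unbounded, the ``same Dyson--Phillips series'' identification is cleanest at the resolvent level, where the Neumann series $\sum_{n}r^n R(\lambda,A)(BR(\lambda,A))^n$ has terms that, by (a) and (b), restrict correctly to $\widetilde{\mathcal{E}}$ and converge there by the same Kato estimate once the substochasticity of $\widetilde{T}_0$ and the inequality $\varphi_{\widetilde{\mathcal{E}}}((A+rB)u)\leq 0$ are in hand. With these two repairs your identification and limiting argument goes through.
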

\vskip.1cm \noindent {\it Proof of Theorem \ref{1tm}.} In view of
\cite[Theorem 1.3, page 102]{Pazy}, for $m> m_1$ the existence and
uniqueness of the solution in question follows by the existence and
the properties of the semigroup $S$ obtained in Lemma \ref{A2lm}.
That is, it has the form $f_t = S(t) f_0$. Let us prove the second
part of the theorem. To this end, we employ Proposition \ref{A3pn},
where as $\widetilde{\mathcal{E}}$ we take
$\mathcal{X}_{h_{\varsigma,\alpha}} =\mathcal{X}_{h_m}$. Note that
the latter means that they are equal as sets, and that $m>m_1$ is
positive even if $m_1=0$, see (\ref{M27z}). Clearly,
$\mathcal{X}_{h_{\varsigma,\alpha}}$ has all the properties as in
Assumption \ref{Aass}, cf. (\ref{La1}). Moreover, in this case
$\widetilde{\mathcal{D}}_A = \{ f \in \mathcal{D}(A) \cap
\mathcal{X}_{h_m} : Lf \in \mathcal{X}_{h_m}\}$. By direct
inspection one checks that both (a) and (b) assumed in Proposition
\ref{A3pn} are satisfied for this choice of
$\widetilde{\mathcal{E}}$. Now we prove that (c) also holds for
$m\geq m_2$ where the latter has to be found. For $f\in
\widetilde{\mathcal{D}}_A \cap
\mathcal{X}^{+}_{h_{\varsigma,\alpha}}$, we write, cf. (\ref{M4}),
(\ref{M5}),
\begin{eqnarray}
  \label{P11}
  \varphi_{h_{\varsigma,\alpha}} ((A+B) f) & = & \int_{\Gamma} h_{\varsigma,
  \alpha}(\gamma) ((A+B) f)(\gamma) \lambda ( d \gamma) \\[.2cm]
  \nonumber & = & \int_{\Gamma} (L^* h_{\varsigma,
  \alpha})(\gamma)  f(\gamma) \lambda ( d \gamma) \\[.2cm]
  \nonumber & = & - \int_{\Gamma}\left(\sum_{x\in \gamma}\left[m \varsigma + (m-\alpha) \psi_\alpha (x)  \right]
  \right)f(\gamma) \lambda ( d \gamma) \\[.2cm]
  \nonumber & + & \Upsilon (\varsigma, \alpha) \int_{\Gamma}
  f(\gamma\cup 0) \lambda ( d \gamma),
\end{eqnarray}
where
\begin{gather}
  \label{P12}
\Upsilon (\varsigma, \alpha)  = \varsigma - 1 + \hat{\beta}(\alpha),
\\[.2cm] \nonumber \hat{\beta} (\alpha) = \int_{\bar{\mathds{R}}_{+}}
\beta (x) e^{-\alpha x} d x, \qquad \beta (x) =
\int_{\bar{\mathds{R}}_{+}} b(x,y) dy.
\end{gather}
Since $\beta$ is integrable, by the Riemann-Lebesgue lemma it
follows that $\hat{\beta}(\alpha) \to 0$ as $\alpha \to +\infty$.
Then one finds $\alpha>0$ such that $\hat{\beta}(\alpha) < 1$. For
this $\alpha$, $\varsigma = 1 - \hat{\beta} (\alpha)$ is positive
and thus can be used in (\ref{P11}), where it yields $\Upsilon
(\varsigma,\alpha) \leq 0$. Now, for this $\alpha$ and $m_1$ as in
(\ref{M27z}), we set
\begin{equation}
 \label{P}
 m_2 = \min\{m_1; \alpha\},
\end{equation}
that, for $m\geq m_2$, yields
\[
{\rm LHS(\ref{P11})} \leq 0,
\]
which then by Proposition \ref{A3pn} yields in turn
\[
N_1(t) \leq \frac{\|f_t\|_{h_{\sigma,\alpha}}}{1-\hat{\beta}(m_2)}
\leq \frac{\|f_0\|_{h_{\sigma,\alpha}}}{1-\hat{\beta}(m_2)} . \qquad
\varsigma = 1 - \hat{\beta}(\alpha)\leq 1 - \hat{\beta}(m_2).
\]
This completes the proof of Theorem \ref{1tm} with $m_2$ defined in
(\ref{P}). \hfill $\square$

\section{Summary and Concluding Remarks}

We begin by making a brief summary of the aspects of the theory
presented here, understandable also for non-mathematicians. Then we
discuss some aspects of this work, as well as outline its possible
continuation.

\subsection{Summary}

In cancer biology, it is well established that cancer cells
proliferate wildly by repeated, uncontrolled mitosis. "Unlike normal
cells, cancer cells ignore the usual density-dependent inhibition of
growth ... piling up until all nutrients are
exhausted\footnote{https://www.biology.iupui.edu/biocourses/N100H/ch8mitosis.html}"
Therefore, to model populations of cancer cells one can use
`particles' that undergo independent branching into two new
`particles' after some random time. Being unharmed populations of
such `particles' grow ad infinitum since the branching number is
two. Therapeutic pressure causes disappearance of some of them from
the population before branching, the result of which may be
restricting the population growth. The effect of the treatment is
proportional to its intensity and to the mean length of the
inter-mitosis period, during which it acts. Then the paramount
problem of modeling of such populations is to find qualitative
relations between the treatment intensity and probabilistic
parameters of the cell cycle processes in a given population. In
this work, we find this relation in the form $m\geq m_2$ with
$m_2>0$ defined in (\ref{P}) and (\ref{M27z}).

\subsection{The model and its study}
The proposed model seems to be the simplest individual-based model
that takes into account the basic aspects of the phenomenon which we
intended to describe: (a) essential mortality caused by external
factors and independent of the interactions inside the population;
(b) randomly distributed lifetimes of the population members, at the
end of which each of them branches into two progenies; (c) branching
independent of the interactions inside the population. The main
difficulty of its mathematical study stemmed from the presence of
the gradient in the Kolmogorov operator $L$ in (\ref{M2}), which is
typical for transport problems \cite{Mustapha}. A more general
version of the proposed model instead of the  last summand in
(\ref{M2}) could contain
\[
\sum_{\xi \subset \gamma} b(\xi) f(\gamma\setminus \xi \cup 0),
\qquad \xi \in \Gamma,
\]
that corresponds to branching into a `cloud' $\xi$ with possibly
random number of progenies. In fact, this might be done in the
present context at the expense of a modification of the bound in
(\ref{M22}). Our choice was motivated by the reasons of simplicity
and practical applications -- mitosis with two progenies.
Noteworthy, in our model the lifetimes of siblings are in general
dependent as random variables. The independent case would correspond
to the choice $b(x,y) = \beta (x) \beta (y)$ with $\beta$ as in
(\ref{P12}). Note, however, that the definition of $m_2$ in
(\ref{P}) remains the same in this case. In order to take into
account also dependence like `parent-progeny', cf \cite{Marz}, one
would make the trait more complex by including the corresponding
parameter. For example, instead of $\bar{\mathds{R}}_{+}$ one may
take $\bar{\mathds{R}}_{+}^2$ consisting of pairs $\hat{x}=(x,y)$ in
which $x$ is still time to fission whereas $y$ is responsible for
the mentioned dependence. This additional trait can be used to
model, e.g., further mutations of the tumor cells.

\subsection{The practical meaning}
As mentioned above, we believe that the proposed theory can have
direct practical applications for the following reasons. There
exists a rich literature on modeling -- parameter fitting including
-- of various types of cancer, see e.g.,
\cite{Marz,Gabriel,Tyson,Ye} and the sources quoted in these
publications. This means that, in a concrete situation, one can
calculate $m_2$ by means of (\ref{P}) and (\ref{M27z}), which then
can be used to estimate the corresponding therapeutic dose.

\subsection{Further development}
Along with the modifications of the model already mentioned above in
this section, we plan to consider also its version describing
infinite populations. Here we plan to employ methods developed in
\cite{Koz}, of which studying finite populations is a part. We also
plan to develop a mesoscopic theory of this model by means of
scaling techniques and Poisson approximations, see also \cite{Koz}.
This would allow for connecting the microscopic theory developed in
this way to a description based on aggregate parameters, similar to
that is \cite{Mee,Rot}.

\section*{acknowledgements} The author thanks Michael R\"ockner for
valuable discussions. He also thanks Marek Kimmel for pointing to
the results published in \cite{Marz} and providing the text of this
publication.

%
%



\end{document}